\newcommand {\R}{\mathbf{R}}
\newcommand {\Ss}{\mathbf{S}}
\newcommand {\B}{\mathbf{B}}
\newcommand {\dist}{\operatorname {dist}}
\newcommand{\Ric}{\operatorname{Ric}}
\newcommand{\quotes}[1]{``#1''}
\newtheorem {thm} {Theorem}
\newtheorem {prop} [thm] {Proposition}
\newtheorem {lemma} [thm] {Lemma}
\newtheorem {cor} [thm] {Corollary}
\newtheorem {rmk} {Remark}
\title[Compactness of constant $Q$-curvature metrics]{Compactness within the space of complete, constant $Q$-curvature 
metrics on the sphere with isolated singularities}
\author[J.H. Andrade]{Jo\~{a}o Henrique\ Andrade}
\author[J.M. do \'O]{Jo\~ao Marcos do \'O}
\author[J. Ratzkin]{Jesse Ratzkin}
\address[J.H. Andrade]{
	Institute of Mathematics and Statistics,
	University of S\~ao Paulo
	\newline\indent 
	05508-090, S\~ao Paulo-SP, Brazil
	\newline\indent 
	and
	\newline\indent 
	Department of Mathematics,
	Federal University of Para\'{\i}ba 
	\newline\indent 
	58051-900, Jo\~ao Pessoa-PB, Brazil}
\email{{andradejh@ime.usp.br}}
\email{{andradejh@mat.ufpb.br}}
\address[J.M. do \'O]{Department of Mathematics,
	Federal University of Para\'{\i}ba
	\newline\indent 
	58051-900, Jo\~ao Pessoa-PB, Brazil}
\email{{jmbo@pq.cnpq.br}}
\address[J. Ratzkin]{Department of Mathematics,
	Universit\"{a}t W\"{u}rzburg
	\newline\indent
	97070, W\"{u}rzburg-BA, Germany}
\email{{jesse.ratzkin@mathematik.uni-wuerzburg.de}}
\thanks{Research supported in part by Conselho Nacional de Desenvolvimento Cient\'ifico e Tecnol\'ogico (CNPq): grant 305726/2017-0, Coordena\c c\~ao de Aperfei\c coamento de Pessoal de N\'ivel Superior (CAPES): grant 88882.440505/2019-01, and Funda\c c\~ao de Apoio \`a Pesquisa do Estado de S\~ao Paulo (FAPESP): grant 2020/07566-3}
\subjclass[2000]{35J60, 35B09, 35J30, 35B40}
\keywords{Paneitz operator, $Q$-curvature, Critical exponent, Isolated singularities, Compactness, Pohozaev invariant}
\begin{document}

\begin{abstract} 
In this paper we consider the moduli space of complete, conformally flat
metrics on a sphere with $k$ punctures having constant positive $Q$-curvature 
and positive scalar curvature. Previous work has shown that such metrics admit 
an asymptotic expansion near each puncture, allowing one to define an asymptotic 
necksize of each singular point. We prove that any set in the moduli space 
such that the distances between distinct punctures and the asymptotic necksizes 
all remain bounded away from zero is sequentially compact, mirroring a theorem 
of D. Pollack about singular Yamabe metrics. Along the way we define a radial 
Pohozaev invariant at each puncture and refine some {\it a priori} bounds of the 
conformal factor, which may be of independent interest. 
\end{abstract} 

\maketitle

\section{Introduction} 

In 1960 H. Yamabe \cite{Y} proposed a program to find optimal metrics in a 
conformal class on a manifold of dimension at least three by minimizing  
the total scalar curvature functional, obtaining a constant scalar curvature 
representative in each conformal class. This program, which now bears his name, 
led to many advancements by N. Trudinger \cite{Tru}, T. Aubin \cite{Aub}, 
R. Schoen \cite{Sch}, and 
many others in the understanding of how geometry, topology and analysis 
interact with each other in compact Riemannian manifolds. The reader can find an 
excellent survey of the resolution of the original Yamabe problem in \cite{LP}. The lack of 
compactness of the group of conformal transformations of the sphere presents 
one of the many complications in carrying out Yamabe's program. This same 
lack of compactness forces one to examine singular solutions which blow up 
along a closed subset. Many people continue to study both the 
regular and the singular Yamabe problems, and many open questions 
in both programs remain. More recent results include \cite{KMS}, in 
which the authors prove the set of solutions to the Yamabe 
problem within a conformal class is compact provided $3 \leq n \leq 24$
 and the conformal class is not the one of the round sphere.  

In recent years many people have pursued parts of Yamabe's 
program for other notions of curvature. In the present note, we 
explore a part of the singular Yamabe program as applied to 
the fourth order $Q$-curvature, which is a higher order analog 
of scalar curvature. On a Riemannian manifold $(M,g)$ of dimension 
$n \geq 5$, the $Q$-curvature is 
\begin{equation} \label{q_curv_defn}
Q_g = -\frac{1}{2(n-1)} \Delta_g R_g - \frac{2}{(n-2)^2} |\Ric_g|^2 + 
\frac{n^3-4n^2+16n-16}{8(n-1)^2(n-2)^2} R_g^2, 
\end{equation} 
where $R_g$ is the scalar curvature of $g$, $\Ric_g$ is the 
Ricci curvature of $g$, and $\Delta_g$ is the Laplace--Beltrami 
operator of $g$. After a conformal change, the $Q$-curvature 
transforms as 
\begin{equation} \label{trans_rule1} 
\widetilde g = u^{\frac{4}{n-4}} g \Rightarrow 
Q_{\widetilde g} = \frac{2}{n-4} u^{- \frac{n+4}{n-4}} P_gu, 
\end{equation} 
where $P_g$ is the Paneitz operator 
\begin{eqnarray} \label{paneitz_op_defn} 
P_gu=\Delta_g^2u + \operatorname{div} \left ( 
\frac{4}{n-2} \operatorname{Ric}_g (\nabla u, \cdot) - 
\frac{(n-2)^2 + 4}{2(n-1)(n-2)} R_g \langle \nabla u, \cdot
\rangle \right )+\frac{n-4}{2} Q_g u. 
\end{eqnarray} 
Paneitz \cite{Pan1} first discovered the operator $P_g$ and 
investigated its conformal invariance. Thereafter Branson 
\cite{Bran1, Bran2} began 
a thorough investigation of $Q_g$ and its variants.
The reader can find excellent summaries of the fourth order 
$Q$-curvature in \cite{BG, CEOY, HY}. 

The $Q$-curvature of the round metric $\overset{\circ}{g}$ is 
$\frac{n(n^2-4)}{8}$, and setting $Q_g$ to be this value gives 
the equation 
\begin{equation} 
P_g u = \frac{n(n-4)(n^2-4)}{16} u^{\frac{n+4}{n-4}}.
\end{equation} 
Just as in the scalar curvature setting, one can search for 
constant $Q$-curvature metrics in a conformal class by 
minimizing the total $Q$-curvature. However, because of 
the conformal invariance one encounters the same lack 
of compactness and presence of singular solutions. Hang 
and Yang \cite{HY2} carry out part of this program in the regular case, 
assuming that the background metric also has positive Yamabe 
invariant. 

In any event, a complete understanding of the 
fourth order analog of the Yamabe problem would 
require an understanding of the following singular 
problem: let $(M,g)$ be a compact Riemannian manifold and 
let $\Lambda \subset M$ be a closed subset. A 
conformal metric $\widetilde{g} = u^{\frac{4}{n-4}} g$ 
is a singular constant $Q$-curvature metric if $Q_{\widetilde{g}}$ 
is constant and $\widetilde{g}$ is complete on $M 
\backslash \Lambda$. According to \eqref{trans_rule1} 
we can write this geometric problem as 
\begin{eqnarray} \label{gen_sing_q_curv} 
P_gu & = & \frac{n(n-4)(n^2-4)}{16} u^{\frac{n+4}{n-4}} 
\textrm{ on }M \backslash \Lambda, \\ \nonumber 
\liminf_{x \rightarrow x_0} u(x) & = & \infty \textrm{ for each }
x_0 \in \Lambda.
\end{eqnarray}

For the remainder of our work we concentrate on the 
case that $(M,g) = (\Ss^n, \overset{\circ}{g})$ is the 
round metric on the sphere and $\Lambda = \{ p_1, 
\dots, p_k\}$ is a finite set of distinct points. Thus we 
examine, given a singular set $\Lambda$ with $\#(\Lambda) 
= k$, the set of functions 
$$u : \Ss^n \backslash \Lambda = \Ss^n \backslash 
\{ p_1, \dots, p_k\} \rightarrow (0,\infty)$$ 
that satisfy 
\begin{eqnarray} \label{const_q_curv_eq}
\overset{\circ}{P} u & = & P_{\overset{\circ}{g}} u = 
\frac{n(n-4)(n^2-4)}{16} u^{\frac{n+4}{n-4}} \\ \nonumber 
\liminf_{x \rightarrow p_j} u(x) & = & \infty \textrm{ for each } 
j=1,2, \dots, k.
\end{eqnarray}
For technical reasons we will also require $R_g \geq 0$.

Following 
\cite{MPU} we define the marked moduli space 
\begin{equation} \label{marked_moduli_defn} 
\mathcal{M}_\Lambda = \left \{ g \in [\overset{\circ}{g}] : Q_g = 
\frac{n(n^2-4)}{8}, \ R_g \geq 0, \ g \textrm{ is complete on } \ \Ss^n \backslash \Lambda
\right \} 
\end{equation} 
and the unmarked moduli space 
\begin{eqnarray} \label{unmarked_moduli_defn} 
\mathcal{M}_k=\left\{ g \in [\overset{\circ}{g}]: Q_g = \frac{n(n^2-4)}{8}, \ R_g \geq 0, \
g \textrm{ is complete on } \ \Ss^n \backslash \Lambda, \ \# (\Lambda) = k \right\}.
\end{eqnarray} 
We equip each moduli space with 
the Gromov--Hausdorff topology. C. S. Lin \cite{Lin} proved that $\mathcal{M}_1$ is 
the empty set, and recently Frank and K\"onig \cite{FK} classified all metrics in 
$\mathcal{M}_2$, proving 
$$\mathcal{M}_{p,q} = (0,\epsilon_n] \textrm{ for each pair } p \neq q \in \Ss^n,$$ 
where 
\begin{equation} \label{cyl_necksize} 
\epsilon_n = \left ( \frac{n(n-4)}{n^2-4} \right )^{\frac{n-4}{8}} \in (0,1).
\end{equation} It follows that 
$$\mathcal{M}_2 = (0,\epsilon_n] \times ((\Ss^n \times \Ss^n 
\backslash \textrm{diag}) /SO(n+1,1)),$$
where the group $SO(n+1,1)$ of conformal transformations acts on each 
$\Ss^n$ factor simultaneously. These metrics corresponding to 
a doubly punctured sphere are all rotationally invariant, and are called the 
Delaunay metrics. We describe them in detail in Section \ref{del_sec}. 

In the present work we explore some of the structure of $\mathcal{M}_k$ 
when $k \geq 3$. Let $\Lambda = \{ p_1, \dots, p_k\}$ with $k\geq 3$ 
and let $g = u^{\frac{4}{n-4}} \overset{\circ}{g} \in \mathcal{M}_\Lambda$. 
As it happens, the metric $g$ is asymptotic to a Delaunay metric near 
each puncture $p_j$, and so one can associate a Delaunay parameter 
$\epsilon_j(g) \in (0, \epsilon_n]$ to each $p_j$ and $g \in \mathcal{M}_\Lambda$. 
(See Section \ref{asymp_sec}.) 
Our main compactness theorem is the following. 
\begin{thm} \label{cmptness_thm} 
Let $k\geq 3$ and let $\delta_1>0, \delta_2>0$ be positive numbers. 
Then the set 
$$\Omega_{\delta_1, \delta_2} = \{ g \in \mathcal{M}_k : 
\operatorname{dist}_{\overset{\circ}{g}} (p_j, p_l) \geq \delta_1 
\textrm{ for each }j \neq l, \epsilon_j(g) \geq \delta_2 \}$$ 
is sequentially compact in the Gromov--Hausdorff topology. 
\end{thm}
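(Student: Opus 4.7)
The plan is to follow the general strategy that D. Pollack used for singular Yamabe metrics, adapted to the fourth-order setting and leveraging the two technical tools advertised in the abstract (the radial Pohozaev invariant and the refined conformal-factor bounds). Let $(g_n) \subset \Omega_{\delta_1,\delta_2}$ with $g_n = u_n^{4/(n-4)}\overset{\circ}{g}$ and singular set $\Lambda_n = \{p_1^n,\dots,p_k^n\}$. By compactness of $\Ss^n$ together with the lower bound $\dist_{\overset{\circ}{g}}(p_j^n,p_l^n) \geq \delta_1$, I can pass to a subsequence and relabel the punctures so that $p_j^n \to p_j^\infty$ for every $j$, with $\dist_{\overset{\circ}{g}}(p_j^\infty,p_l^\infty) \geq \delta_1$ whenever $j \neq l$; set $\Lambda_\infty = \{p_1^\infty,\dots,p_k^\infty\}$.

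The core step is to upgrade this to convergence of the conformal factors to a solution on the limit punctured sphere. I would use the refined a priori bounds developed in the paper to produce, for every compact $K \subset \Ss^n\setminus\Lambda_\infty$, uniform two-sided bounds $0 < c(K,\delta_1,\delta_2) \leq u_n \leq C(K,\delta_1,\delta_2)$ for all large $n$. The upper bound prevents bubbling at regular points; the lower bound uses the necksize hypothesis $\epsilon_j(g_n) \geq \delta_2$ (through the asymptotic Delaunay description in Section~\ref{asymp_sec}) to keep the mass of $u_n$ near each puncture uniformly away from zero and thereby rule out a global collapse of the profile. Since $\overset{\circ}{P}$ is a uniformly elliptic fourth-order operator with smooth coefficients, standard Schauder estimates applied to \eqref{const_q_curv_eq} bootstrap to uniform $C^{4,\alpha}_{\mathrm{loc}}(\Ss^n\setminus\Lambda_\infty)$ bounds, and Arzel\`a--Ascoli extracts a subsequential limit $u_\infty > 0$ that still solves \eqref{const_q_curv_eq} on $\Ss^n\setminus\Lambda_\infty$.

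It then remains to verify that $g_\infty = u_\infty^{4/(n-4)}\overset{\circ}{g}$ lies in $\Omega_{\delta_1,\delta_2}$, i.e.\ that $u_\infty$ blows up at every $p_j^\infty$ and that $\epsilon_j(g_\infty) \geq \delta_2$ for each $j$. Both should follow from the radial Pohozaev invariant introduced earlier in the paper: this is a conserved integral over small spheres around each puncture, depends continuously on the conformal factor under the convergence we have obtained, and can be expressed as a strictly monotone function of the necksize that vanishes only in the removable-singularity case. Hence the uniform lower bound $\delta_2$ on the necksizes of the $g_n$ transfers to the limit, simultaneously ruling out removability of the singularity at each $p_j^\infty$ and yielding $\epsilon_j(g_\infty) \geq \delta_2$. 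The asymptotic Delaunay description of Section~\ref{asymp_sec} then promotes the $C^{4,\alpha}_{\mathrm{loc}}$ convergence of conformal factors to Gromov--Hausdorff convergence of the complete metrics $(\Ss^n\setminus\Lambda_n, g_n)$ to $(\Ss^n\setminus\Lambda_\infty, g_\infty)$.

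The step I expect to be the main obstacle is the uniform upper bound on $u_n$ on compact subsets of $\Ss^n\setminus\Lambda_\infty$, that is, ruling out bubbling at interior regular points. In the second-order Yamabe setting this is handled through the Pohozaev identity combined with the maximum principle, but the Paneitz operator does not admit a general maximum principle. The hypothesis $R_g \geq 0$ enters precisely here: it is what provides the positivity and coercivity required to factor $\overset{\circ}{P}$ and to run maximum-principle-type arguments on the conformal factor. Carrying out a careful blow-up analysis that excludes any limit bubble not already captured by one of the prescribed punctures, and that is compatible with the asymptotic Delaunay control near each $p_j^n$, is where I expect most of the technical effort to concentrate.
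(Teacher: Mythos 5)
Your overall architecture matches the paper's: pass to a subsequence with converging punctures, establish interior $C^{4,\alpha}_{\mathrm{loc}}$ control via a priori bounds plus Arzel\`a--Ascoli, and invoke the radial Pohozaev invariant to transfer the necksize hypothesis to the limit and rule out removable singularities. However, there is a genuine gap in the middle of the argument, precisely in the step you flagged as needing the most care but then dispatched too quickly.

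You propose to prove a \emph{uniform} two-sided bound $0 < c(K,\delta_1,\delta_2) \leq u_n \leq C(K,\delta_1,\delta_2)$ on compact $K \subset \Ss^n\setminus\Lambda_\infty$ before passing to the limit, with the lower bound supposedly coming from the Delaunay asymptotics of Section~\ref{asymp_sec} and the necksize hypothesis ``keeping the mass near each puncture away from zero.'' This does not work as stated. The constants in the refined asymptotic expansion \eqref{refined_asymp1} are solution-dependent, and the lower bound from Theorem~\ref{rmbl_sing} (Jin--Xiong) is explicitly non-uniform, with a constant $C_2$ that depends on $v$. Moreover, even a uniform lower bound in an annulus around a puncture would not, by itself, propagate to a uniform positive lower bound on an arbitrary compact set away from all punctures. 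The paper's resolution is to prove only the uniform \emph{upper} bound (Theorem~\ref{upper_bnd}), pass to a limit $\overline{u}\geq 0$, and then prove positivity of the limit by a separate contradiction argument that uses three ingredients you do not mention: the Harnack inequality for biharmonic superharmonic functions of Caristi--Mitidieri (Theorem~\ref{harnack_inequality}), which shows that if $\overline{u}(q)=0$ at one regular point then the $u_i$ collapse uniformly on compacts; the Gursky--Malchiodi Green's function (Theorem~\ref{greens_func}), which identifies the blow-down limit $w_i = u_i/u_i(q)$ with a sum $\sum_j \alpha_j G_{\bar p_j}$; and a direct computation using the decomposition \eqref{poho_integrand3} of the Hamiltonian showing that in this collapsing regime $\mathcal{P}_{\mathrm{rad}}(g_i, p_1^i)\to 0$, contradicting the necksize hypothesis. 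The simple statement that the Pohozaev invariant ``depends continuously on the conformal factor'' is not enough on its own, because it is vacuous when the limit vanishes identically; the whole point of the Harnack--Green's-function step is to extract a nontrivial rescaled limit and show its Pohozaev data forces a contradiction.

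A secondary inaccuracy: for the uniform upper bound you attribute the role of $R_g\geq 0$ to a ``factorization of $\overset{\circ}{P}$ and maximum-principle-type arguments on the conformal factor.'' The mechanism in the paper is different and more geometric: $R_g\geq 0$ guarantees, via the Chang--Han--Yang result (Theorem~\ref{convex_bndry_lemma}), that every geodesic sphere in $\Ss^n\setminus\Lambda$ is mean-convex with respect to $g$; a blow-up at a hypothetical interior concentration point produces, by Lin's classification (Theorem~1.3 of \cite{Lin}), the spherical bubble $U_{\mathrm{sph}}$, for which large geodesic spheres are mean-\emph{concave}, and that contradiction closes the argument. So $R_g\geq 0$ enters as a convexity constraint on hypersurfaces rather than as a source of a maximum principle for the fourth-order operator.
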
 
We model this result on a compactness theorem 
of Pollack \cite{Pol},  which states that the similarly 
defined set in the moduli space of singular Yamabe metrics 
is sequentially compact. Very recently Wei \cite{Wei} 
proved a similar theorem in the context of constant 
$\sigma_k$-curvature. Pollack's theorem was an important 
first step in understanding the structure of the moduli space of 
singular Yamabe metrics on a finitely puctured sphere,  
a program that is still not complete. We hope our theorem 
above can play a similar role in advancing the general 
theory of constant $Q$-curvature metrics with isolated 
singularities. 

\section{Preliminaries} 

In this section we present some prerequisite analysis proven 
elsewhere which we will use below. 

We first rewrite \eqref{const_q_curv_eq}. Pulling back by (the inverse of) 
stereographic projection, we can write 
\begin{equation} \label{sph_soln}
\overset{\circ}{g} = \left ( \frac{2}{1+|x|^2} \right )^2 \delta = 
U_{\textrm{sph}}^{\frac{4}{n-4}} \delta, \qquad 
U_{\textrm{sph}} = \left ( \frac{1+|x|^2}{2} \right )^{\frac{4-n}{2}}. 
\end{equation} 
In these coordinates \eqref{const_q_curv_eq} takes the form 
\begin{equation} \label{const_q_curv_eq2} 
u : \R^n \backslash \{ q_1, \dots, q_k \} \rightarrow (0,\infty), \qquad 
\Delta_0^2(U_{\textrm{sph}} u) = \frac{n(n-4)(n^2-4)}{16} 
(U_{\textrm{sph}}u)^{\frac{n+4}{n-4}}, 
\end{equation} 
where $\Delta_0$ is the usual flat Laplacian and $q_j$ is the 
image of $p_j$ under the stereographic map. 
Also, the 
condition $R_g \geq 0$ is equivalent to the differential inequality 
\begin{equation} \label{pos_scal}
-\Delta_0 (U_{\textrm{sph}} u)^{\frac{n-2}{n-4}} \geq 0 \Leftrightarrow 
-\Delta_0 (U_{\textrm{sph}}u) \geq \frac{2}{n-4} \frac{|\nabla 
(U_{\textrm{sph}}u)|^2}{U_{\textrm{sph}}u} . 
\end{equation} 

In this Euclidean setting the transformation rule \eqref{trans_rule1} 
reads 
\begin{equation} \label{scaling_law} 
\Delta_0^2 u = A u^{\frac{n+4}{n-4}} \Rightarrow u_\lambda (x) 
= \lambda^{\frac{n-4}{2}} u(\lambda x) \textrm{ satsfies } \Delta_0^2 
u_\lambda = A u_\lambda^{\frac{n+4}{n-4}} \textrm{ for each }\lambda >0
\end{equation} 
for any constant $A$.

\subsection{Delaunay metrics} \label{del_sec} 

Let $p \neq q \in \Ss^n$ and let $g \in \mathcal{M}_{\{ p,q\}}$. We may 
precompose by an appropriate dilation and assume $p=-q$, and 
then rotate $\Ss^n$ so that $p$ is the north pole and $q$ is 
the south pole. After reframing as in the previous paragraph we 
obtain a function $u : \R^n \backslash \{ 0 \} \rightarrow (0, \infty)$ 
satisfying the PDE \eqref{const_q_curv_eq2}. 
Lin \cite{Lin} proved that this solution must be rotationally invariant 
about $0$, and later Frank and K\"onig \cite{FK}
classified all the ODE solutions. 

Their classification is easiest to see after changing to cylindrical 
coordinates. We let 
\begin{eqnarray} \label{cyl_variables} 
t = -\log |x|,&& \quad \theta = \frac{x}{|x|}, \\ \nonumber 
v(t,\theta) & = &  
e^{\left ( \frac{4-n}{2} \right ) t} U_{\textrm{sph}}(e^{-t} \theta) 
u(e^{-t} \theta) = (e^t \cosh t)^{\frac{4-n}{2}} u(e^{-t} \theta) 
\end{eqnarray} 
This transforms the Paneitz operator into 
\begin{eqnarray} \label{cyl_paneitz_op}
P_{\textrm{cyl}} &= &\frac{\partial^4}{\partial t^4} + \Delta_\theta^2 + 
2\Delta_\theta \frac{\partial^2}{\partial t^2} - \left ( \frac{n(n-4)+8}{2}
\right ) \frac{\partial^2}{\partial t^2}  \\ \nonumber 
&&- \frac{n(n-4)}{2} \Delta_\theta + \frac{n^2(n-4)^2}{16}, 
\end{eqnarray}
so that \eqref{const_q_curv_eq2} becomes 
\begin{equation} \label{cyl_const_q_curv_eq}
v: \R \times \Ss^{n-1} \rightarrow (0,\infty), \qquad 
P_{\textrm{cyl}} v = \frac{n(n-4)(n^2-4)}{16} v^{\frac{n+4}{n-4}}.
\end{equation} 
The fact that the orginal function $u$ is radial implies 
$v$ is a function of $t$ alone, and so \eqref{cyl_const_q_curv_eq}
reduces to the ODE  
\begin{equation} \label{del_ode}
\ddddot v - \left ( \frac{n(n-4)+8}{2} \right ) \ddot v + \frac{n^2(n-4)^2}{16} v 
= \frac{n(n-4)(n^2-4)}{16} v^{\frac{n+4}{n-4}}.
\end{equation} 

We find two solutions explicitly. The cylindrical solution is the 
only constant solution, namely $v_{\textrm{cyl}}=\epsilon_n$, given 
in \eqref{cyl_necksize}. Also, the spherical solution $U_{\textrm{sph}}$ 
given in \eqref{sph_soln} 
transforms under the change of variables \eqref{cyl_variables} 
into $v_{\textrm{sph}}=(\cosh t)^{\frac{4-n}{2}}$. The Delaunay solutions found 
by Frank and K\"onig in \cite{FK} interpolate between the 
cylindrical and spherical solutions. Indeed, for each $\epsilon 
\in (0,\epsilon_n)$ there exists a unique solution $v_\epsilon$ of 
the ODE \eqref{del_ode} realizing its minimal value of $\epsilon$ 
at $t=0$. Each $v_\epsilon$ is periodic with minimal period 
$T_\epsilon$, and these Delaunay solutions account for all 
global solutions of the ODE \eqref{del_ode}. 

Transforming back to Euclidean coordinates, we of course obtain 
the solutions 
\begin{equation} \label{del_eucl_coords}
u_\epsilon : \R^n \backslash \{ 0 \} \rightarrow (0,\infty), 
\qquad u_\epsilon (x) = |x|^{\frac{4-n}{2}} v_\epsilon (-\log |x|).
\end{equation} 
We may then apply global conformal transformations to construct
the translated Delaunay solutions. The first such 
family is $\widetilde u_{\epsilon,a}(x) = u_\epsilon (x-a)$ 
for some fixed vector $a \in \R^n$. The second family is 
more important to our later analysis, and is given by 
translating the point at infinity. More precisely, we define 
\begin {eqnarray*} 
u_{\epsilon, a} (x) & = & \widehat{\mathbb{K}}_0 (\widehat{\mathbb{K}}_0
(u_\epsilon (\cdot - a)) (x)  \\
& = & |x|^{\frac{n-4}{2}} \left | \frac{x}{|x|} - |x| a\right |^{\frac{4-n}{2}}
v_\epsilon \left ( -\log |x| - \log \left | \frac{x}{|x|} - |x| a \right | \right )  ,
\end {eqnarray*}
where 
$$\widehat {\mathbb{K}}_0(u)(x) = |x|^{4-n} u \left ( \frac{x}{|x|^2} \right )$$ 
is the Kelvin transform of $u$. In cylindrical coordinates we can 
write this expression for $u_{\epsilon,a}$ as 
\begin{eqnarray} \label{trans_del}
v_{\epsilon,a} (t,\theta) & = & |\theta - e^{-t} a|^{\frac{4-n}{2}}
v_\epsilon(t + \log |\theta - e^{-t} a|) \\ \nonumber 
& = & v_\epsilon(t) + e^{-t} \langle \theta, a\rangle 
\left ( -\dot v_\epsilon(t) + \frac{n-4}{2} v_\epsilon(t) \right ) 
+ \mathcal{O}(e^{-2t})
\end{eqnarray} 

\subsection{Asymptotics} \label{asymp_sec}

In \cite{JX} Jin and Xiong proved that any positive, 
superharmonic solution of \eqref{const_q_curv_eq2} 
in a punctured ball is asymptotically symmetric. In 
other words, they show there exists $\alpha >0$ such that 
\begin{equation} \label{asymp_symm1} 
u(x) = \overline{u}(|x|) (1+ \mathcal{O}(|x|^\alpha)), \qquad 
\overline{u} (r) = \frac{1}{r^{n-1} |\Ss^{n-1}|} \int_{|x|=r} 
u(\theta) d\theta.
\end{equation} 
Later the first two authors \cite{AO} and the third 
author \cite{R} independently derived refined asymptotics for 
positive, singular solutions of \eqref{const_q_curv_eq2}. 
Roughly speaking, the translated Delaunay solutions 
of \eqref{trans_del} give the next order term in the expansion of 
$u$. They show there exists $\beta > 1$, $\epsilon \in (0, \epsilon_n]$, 
$T \in [0,T_\epsilon)$, 
and $a \in \R^n$ such that 
\begin{eqnarray} \label{refined_asymp1}
u(x) & = & |x|^{\frac{4-n}{2}} \left ( v_\epsilon (-\log |x| +T)\right. \\ \nonumber 
& + & \left.|x| \left \langle \frac{x}{|x|}, a
\right \rangle \left ( -\dot v_\epsilon (-\log |x| +T) + \frac{n-4}{2} v_\epsilon(-\log|x| +T)
\right ) + \mathcal{O}(|x|^\beta) \right ). 
\end{eqnarray} 
In cylindrical coordinates this estimate has the form 
\begin{equation} \label{refined_asymp2} 
v(t,\theta) = v_\epsilon(t +T) + e^{-t} \langle \theta, a \rangle \left ( -\dot v_\epsilon
(t+T) + \frac{n-4}{2} v_\epsilon(t+T) \right ) + \mathcal{O}(e^{-\beta t}).
\end{equation}  

\subsection{Some other useful theorems} 
For the sake of  completeness, we state some background results which will be required later in the proof of our main result.

We first quote the following theorem of Chang, Han and Yang \cite[Theorem~1.1]{CHY}.
\begin{thm}[Chang, Han and Yang] \label{convex_bndry_lemma}
Let $n\geq 5$, let $\Lambda \subset \Ss^n$ be a proper closed set, 
and let $g = u^{\frac{4}{n-4}} \overset{\circ}{g}$ be a complete metric
on $\Ss^n \backslash \Lambda$ such that 
$$Q_g = \frac{n(n^2-4)}{8}, \qquad R_g \geq 0.$$ 
Then $\partial \overset{\circ}{\B}_\rho(x_0)$ is has positive 
mean curvature with respect to $g$, computed with the 
inward pointing normal, where $\overset{\circ}{\B}_\rho(x_0)$
is any ball with respect to the round metric contained in $\Ss^n \backslash 
\Lambda$.  
\end{thm}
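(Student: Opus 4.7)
The plan is to combine the conformal transformation law for mean curvature with the superharmonic inequality \eqref{pos_scal} implied by $R_g\geq 0$, and to close the argument with a maximum-principle / Kelvin-transform comparison.

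First I would transfer the problem to Euclidean space via stereographic projection, choosing the pole outside $\Lambda\cup\overline{\overset{\circ}{\B}_\rho(x_0)}$ so that the geodesic ball becomes a Euclidean ball $B=B_R(y_0)\subset\R^n$ with $\overline{B}\cap\bar\Lambda=\emptyset$, where $\bar\Lambda$ denotes the Euclidean image of the singular set. Set $\bar u=U_{\mathrm{sph}}u$ and $V:=\bar u^{(n-2)/(n-4)}$; then in Euclidean coordinates $g=V^{4/(n-2)}\delta$, and by \eqref{pos_scal} the function $V$ is a positive Euclidean-superharmonic function on $\R^n\setminus\bar\Lambda$ that blows up at every point of $\bar\Lambda$ (by completeness of $g$).

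Second, I would rewrite the mean-curvature positivity as a first-order inequality on $V$. Applying the Yamabe-type conformal change of mean curvature to $g=V^{4/(n-2)}\delta$ yields
\[
H_g \;=\; V^{-2/(n-2)}\!\left(\frac{n-1}{R}+\frac{2(n-1)}{n-2}\cdot\frac{\nu_{\mathrm{in}}(V)}{V}\right)\qquad\text{on }\partial B,
\]
so $H_g>0$ reduces to the pointwise estimate $\nu_{\mathrm{in}}(V)>-\tfrac{n-2}{2R}\,V$ on $\partial B$.

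Third, I would establish this inequality through a Kelvin / moving-sphere comparison of $V$ with its inversion across a Euclidean sphere of radius $R$ tangent to $\partial B$ at each $p\in\partial B$. Since the Kelvin transform of a positive superharmonic function is again positive and superharmonic on the inverted domain, and since the image of $\bar\Lambda$ is compactly contained outside $\overline{B}$ by construction, the strong maximum principle and the Hopf boundary point lemma applied to this comparison produce the required sign and quantitative constant $(n-2)/(2R)$ at $p$.

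The hard part will be this last step: one must select the comparison sphere uniformly in $p\in\partial B$, check that the Kelvin image of $\bar\Lambda$ stays on the correct side of that sphere, and extract the sharp normal-derivative constant from the Hopf inequality. The case $\rho\geq \pi/2$ is especially delicate, since the background round mean curvature of $\partial\overset{\circ}{\B}_\rho(x_0)$ then has the ``wrong'' sign and must be compensated by the singular-set blow-up of $V$; this is where the exponent $(n-2)/(n-4)$ in the definition of $V$, which encodes the Yamabe-type conformal covariance of the problem, is essential.
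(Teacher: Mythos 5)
The paper does not prove this statement at all; it is quoted directly from Chang--Han--Yang \cite[Theorem~1.1]{CHY}, so there is no internal proof to compare your proposal against. Evaluating the proposal on its own terms, there are two genuine problems.

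First, the reduction carries a sign error. With the convention used in the paper (a small round ball's boundary has positive mean curvature with respect to the inward normal), the conformal change of mean curvature for $g = V^{4/(n-2)}\delta$ is
$$
H_g \;=\; V^{-\frac{2}{n-2}}\left(\frac{n-1}{R} \,-\, \frac{2(n-1)}{n-2}\cdot\frac{\nu_{\mathrm{in}}(V)}{V}\right),
$$
with a minus sign, so positivity is equivalent to $\nu_{\mathrm{in}}(V) < \tfrac{n-2}{2R}V$, not $\nu_{\mathrm{in}}(V) > -\tfrac{n-2}{2R}V$. You can check this on $V(x)=|x-q_0|^{2-n}$ with $q_0$ just outside $\overline{B_R(y_0)}$: the metric $V^{4/(n-2)}\delta$ is Euclidean after inversion about $q_0$, and the image of $\partial B_R(y_0)$ is a round Euclidean sphere with positive inward mean curvature, whereas your formula returns a negative value.

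Second, and more seriously, the only structural inputs your argument uses are superharmonicity of $V$ (from $R_g \geq 0$) and blow-up at $\bar\Lambda$ (from completeness). These are not sufficient, and the step you describe as ``the hard part'' cannot be carried out from them alone; the constant $Q$-curvature equation has to enter somewhere. To see the obstruction concretely: writing $V$ via its Riesz decomposition $V = c_n\int_{\R^n} |x-y|^{2-n}\,d\mu(y)$ with $\mu = -\Delta V \geq 0$, one computes at $p\in\partial B_R(y_0)$
$$
\partial_r V(p) + \frac{n-2}{2R}V(p) \;=\; \frac{c_n(n-2)}{2R}\int_{\R^n}|p-y|^{-n}\bigl(|y-y_0|^2 - R^2\bigr)\,d\mu(y),
$$
which is positive exactly when the mass of $\mu$ is concentrated outside $\overline{B_R(y_0)}$. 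But superharmonicity only gives $\mu \geq 0$; it places no upper bound on $-\Delta V$ inside the ball, and one can construct positive superharmonic $V$ that blow up at a compact set outside $\overline{B_R}$, decay like $|x|^{2-n}$ at infinity (so they extend across the stereographic pole), and yet have $-\Delta V$ so concentrated near $y_0$ that the integral above is negative. Such $V$ do not satisfy the Paneitz equation, and indeed the corresponding conformal metric on $\Ss^n$ has a geodesic sphere with negative $g$-mean curvature -- so the statement is genuinely false under your hypotheses. A correct proof must use the fourth-order PDE (for instance, through the fact that $-\Delta \bar u$ is itself positive superharmonic because $\Delta^2\bar u > 0$), not merely the second-order inequality \eqref{pos_scal}. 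Relatedly, the exponent $(n-2)/(n-4)$ you highlight is convenient but not essential: working directly with $\bar u$ gives the same reduction with constant $(n-4)/(2R)$, and the $\rho \geq \pi/2$ case is not special once one stereographically projects so that the geodesic ball becomes a Euclidean ball.
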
 

We will also need a version of Harnack's inequality, which was proven 
by Caristi and Mitidieri \cite[Theorem~3.6]{CM}. 
\begin{thm}[Caristi and Mitidieri]\label{harnack_inequality}  Let $u$ be a superharmonic function defined in 
a domain $\Omega \subset \R^n$ such that $\Delta_0^2 u = f(u)$, 
where $f$ is either linear or superlinear and $f(0) = 0$. Then there exists 
$\rho_0> 0$ such that for $\rho  \leq \rho_0$ we have 
\begin{equation} \label{harnack} 
\sup_{\overset{\circ}{\B}_\rho(p)} u \leq C \inf_{\overset{\circ}{\B}_\rho(p)} u,
\end{equation} 
where the constant $c$ depends only on the domain $\Omega$, the function $f$, 
and $\rho$. 
\end{thm}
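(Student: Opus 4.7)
The plan is to decompose the fourth-order equation into a coupled system of two second-order Poisson equations with nonnegative right-hand sides, and then run classical one-sided estimates for nonnegative superharmonic functions on each piece.

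First, I would observe that the hypothesis already gives me two superharmonic functions. Setting $w := -\Delta_0 u$, the superharmonicity of $u$ gives $w \geq 0$, and the equation $\Delta_0^2 u = f(u)$ rewrites as $-\Delta_0 w = f(u)$. Since $f(0)=0$ and $f$ is either linear or superlinear on $[0,\infty)$, we have $f(u) \geq 0$ on the set where $u \geq 0$, so $w$ is also a nonnegative superharmonic function. Thus the problem is essentially that of proving Harnack for the coupled Poisson system
\begin{equation*}
-\Delta_0 u = w, \qquad -\Delta_0 w = f(u), \qquad u,\, w \geq 0
\end{equation*}
on a small ball $\overset{\circ}{\B}_{2\rho}(p) \Subset \Omega$.

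Second, I would invoke the classical weak Harnack inequality for nonnegative superharmonic functions (Gilbarg--Trudinger, Thm.~8.18 style) on the enlarged ball, giving some exponent $q>0$ and a constant $C=C(n,q)$ with
\begin{equation*}
\left( \int_{\overset{\circ}{\B}_{3\rho/2}(p)} u^q \, dx \right)^{1/q} \leq C \rho^{n/q}\, \inf_{\overset{\circ}{\B}_\rho(p)} u,
\end{equation*}
and the analogous bound for $w$. To upgrade the left-hand side to an $L^\infty$ norm, I would use Green's representation in $\overset{\circ}{\B}_{2\rho}(p)$, writing $u = h + N[w]$ where $h$ is the harmonic extension of $u\rvert_{\partial \overset{\circ}{\B}_{2\rho}}$ and $N[w]$ is the Newtonian potential of $w$. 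The harmonic part $h$ is nonnegative and obeys the usual Harnack inequality, so $\sup_{\overset{\circ}{\B}_\rho} h \leq C \inf_{\overset{\circ}{\B}_\rho} h$; the potential $N[w]$ is controlled pointwise via standard kernel estimates by the $L^1$ norm of $w$, which by the weak Harnack step is controlled by $\inf w$. A parallel decomposition $w = \tilde h + N[f(u)]$ reduces the estimate on $f(u)$ to one on $u$ itself, and the two estimates together close a coupled linear system for $\sup u$ in terms of $\inf u$.

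The main obstacle will be keeping the constant independent of the solution $u$ despite the possibly superlinear growth of $f$: if $f(u)/u \to \infty$ as $u \to \infty$, then the potential $N[f(u)]$ cannot naively be absorbed into a multiple of $N[u]$. This is precisely why the hypothesis $\rho \leq \rho_0$ is needed. One chooses $\rho_0$ small enough that a Moser-type iteration, applied to the coupled system above, converges — each iteration gains a factor proportional to $\rho^\alpha$ coming from the kernel bounds on $G$, and for $\rho_0$ small the geometric series closes, yielding a constant $C$ depending only on $\Omega$, $f$, and $\rho$.
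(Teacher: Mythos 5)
First, note that the paper does not prove this theorem at all: it is stated in the ``Some other useful theorems'' subsection and cited directly from Caristi and Mitidieri \cite[Theorem~3.6]{CM}, so there is no in-paper proof to compare against. I will therefore evaluate your sketch on its own terms.

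Your overall strategy — rewrite $\Delta_0^2 u = f(u)$ as the cooperative second-order system $-\Delta_0 u = w$, $-\Delta_0 w = f(u)$ with $u,w\geq 0$, and combine weak Harnack with the Green's representation — is indeed the standard route for a biharmonic Harnack inequality, and it is in the spirit of what Caristi--Mitidieri do. (One small point you use implicitly: you need $u\geq 0$ both to make $\sup u\leq C\inf u$ meaningful and to conclude $f(u)\geq 0$; this is true in the paper's application but worth stating as a hypothesis.)

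There is, however, a concrete gap in the middle step. You claim that ``the potential $N[w]$ is controlled pointwise via standard kernel estimates by the $L^1$ norm of $w$.'' This is false for $n\geq 3$: the Newtonian kernel $|x-y|^{2-n}$ is unbounded, so an $L^1$ bound on $w$ controls $N[w]$ only in weak-$L^{n/(n-2)}$, not in $L^\infty$. Nor can you get the needed $L^p$, $p>n/2$, from the weak Harnack step, since for nonnegative superharmonic functions weak Harnack only reaches exponents $q<n/(n-2)$, and $n/(n-2)<n/2$ precisely when $n\geq 5$, which is the regime of this paper. With only the $L^1$ control you describe, the Moser-type iteration you invoke at the end has nothing to close on, regardless of how small $\rho_0$ is chosen.

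The missing idea is to exploit the superharmonicity of $w$ \emph{again} on the near-field part of the potential. Split, for $x\in\overset{\circ}{\B}_\rho(p)$,
\[
N[w](x)\ \leq\ c_n\int_{\overset{\circ}{\B}_{\rho/2}(x)}|x-y|^{2-n}w(y)\,dy \ +\ c_n\left(\tfrac{\rho}{2}\right)^{2-n}\!\|w\|_{L^1(\overset{\circ}{\B}_{2\rho}(p))}.
\]
The far-field term is genuinely controlled by $\|w\|_{L^1}$, hence by $\inf w$ via weak Harnack. For the near-field term, use the sub-mean-value inequality for the superharmonic $w$, $\int_{\partial\B_r(x)}w\,d\sigma\leq |\partial\B_r|\,w(x)$, and integrate in $r$:
\[
\int_{\overset{\circ}{\B}_{\rho/2}(x)}|x-y|^{2-n}w(y)\,dy\ \leq\ C\,w(x)\int_0^{\rho/2} r\,dr\ =\ C\rho^2\,w(x).
\]
This is what produces the small factor $\rho^2$ that lets the coupled linear system in $\sup u$, $\sup w$ close for $\rho\leq\rho_0$. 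Without this step your argument does not go through; with it, the rest of your outline (Harnack for the harmonic parts, weak Harnack for the $L^1$ masses, and absorbing $f(u)$ for small $\rho$) is a workable reconstruction of the cited result.
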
 

Gursky and Malchiodi \cite[Proposition~2.5]{GM} prove the existence of a positive Greens function 
for the Paneitz operator of the round sphere. 
\begin{thm}[Gursky and Malchiodi]  \label{greens_func}
Let $(M,g)$ be a compact 
Riemannian manifold such that $R_g \geq 0$ and $Q_g >0$. Then 
for each $p \in M$ there exists a Greens function $G_p$ 
satisfying 
$$G_p : M \backslash \{ p \} \rightarrow (0,\infty), \qquad 
P_gG_p = \delta_p,$$ 
where $\delta_p$ is the Dirac $\delta$-function with a 
singularity at $p$. Furthermore, if either $n=5,6,7$ or $g$ is conformally flat 
then there exists $c>0$ depending only on $n$ and $\alpha$ such that 
\begin{equation} \label{green_estimate} 
G_p(x) = \frac{1}{2n(n-2)(n-4) \omega_n} \left ( \operatorname{dist}_g(x,p) \right )^{4-n} +
\mathcal{O}(1) \end{equation} 
in conformal normal coordinates, 
where $\omega_n$ is the volume of a unit ball in $\R^n$. 
\end{thm}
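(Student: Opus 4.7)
My plan is to (i) establish existence of $G_p$ by inverting $P_g$ between Sobolev spaces, (ii) upgrade to positivity via a fourth-order strong maximum principle tailored to the sign hypotheses, and (iii) extract the leading asymptotics by comparison with the Euclidean biharmonic fundamental solution in conformal normal coordinates.

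For existence, I would show that $P_g$, viewed as a self-adjoint operator on $L^2(M)$ with domain $H^4(M)$, has trivial kernel. Integration by parts in \eqref{paneitz_op_defn} produces
\[ \mathcal{E}[u] := \int_M u P_g u\, d\mu_g = \int_M \left[(\Delta_g u)^2 - \tfrac{4}{n-2}\Ric_g(\nabla u,\nabla u) + \tfrac{(n-2)^2+4}{2(n-1)(n-2)} R_g |\nabla u|^2 + \tfrac{n-4}{2} Q_g u^2\right] d\mu_g. \]
A coercivity argument of Gursky type (applying the Bochner identity to absorb the sign-indefinite Ricci term against $\int(\Delta_g u)^2$ and $\int R_g |\nabla u|^2$, and using $Q_g>0$ to rule out constants) yields $\mathcal{E}[u]\ge c\|u\|_{H^2}^2$ under $R_g\ge 0$ and $Q_g>0$. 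Fredholm theory for self-adjoint elliptic operators then produces a bounded inverse on every Sobolev scale, and I set $G_p := P_g^{-1}\delta_p$, which lies in $C^\infty(M\setminus\{p\})$ by interior elliptic regularity.

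To obtain positivity, I would prove the fourth-order strong maximum principle: if $u$ is continuous with $P_g u\ge 0$ distributionally, then under $R_g\ge 0$ and $Q_g>0$ either $u\equiv 0$ or $u>0$ everywhere. The argument exploits that the conformal Laplacian $L_g = -\Delta_g + \tfrac{n-2}{4(n-1)} R_g$ is nonnegative under $R_g\ge 0$ and admits the classical second-order maximum principle; combined with a continuity/superharmonicity argument for $L_g u$ that uses the positivity of $Q_g$, one rules out interior zeros of $u$. Applied to $G_p$, the singular behaviour at $p$ forces $G_p>0$ on a punctured neighbourhood, and the maximum principle then propagates positivity to all of $M\setminus\{p\}$.

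For the asymptotic expansion, I would work in conformal normal coordinates of high order $N$ centered at $p$, where $\det g_{ij} = 1 + O(|x|^N)$ and the curvature tensors vanish at $p$ (and in the conformally flat case one may arrange $g\equiv\delta$ in a full neighbourhood of $p$). Write $P_g = \Delta_0^2 + E$, with $E$ a fourth-order operator whose coefficients vanish at $p$. With $\Gamma(x) = c_n|x|^{4-n}$, $c_n = [2n(n-2)(n-4)\omega_n]^{-1}$, so $\Delta_0^2\Gamma = \delta_0$, decompose $G_p = \chi\Gamma + H$ for a cutoff $\chi\equiv 1$ near $p$; then $H$ satisfies $P_g H = -E(\chi\Gamma) - [\Delta_0^2,\chi]\Gamma$. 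A direct computation shows the right-hand side sits in $L^q$ for some $q>n/4$ precisely when $n\in\{5,6,7\}$ (since $|x|^{4-n}\in L^q_{\mathrm{loc}}$ requires $q<n/(n-4)$, compatible with $q>n/4$ only for $n<8$) or when $E$ vanishes near $p$ (the conformally flat case). $W^{4,q}$ elliptic regularity combined with the Sobolev embedding $W^{4,q}\hookrightarrow L^\infty$ then yields $H\in L^\infty$, the desired $\mathcal{O}(1)$ remainder.

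The main obstacle is positivity of $G_p$: fourth-order operators generically admit sign-changing Green's functions (the biharmonic on smooth domains being the classical example), and both hypotheses $R_g\ge 0$ and $Q_g>0$ must be exploited in full to recover the maximum principle. The dimensional threshold $n\le 7$ (or conformal flatness) is equally subtle and is dictated precisely by matching the Sobolev regularity of $E\Gamma$ to the order-four regularity gain of $P_g^{-1}$ near the singularity.
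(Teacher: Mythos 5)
First, note that the paper contains no proof of this statement: it is quoted, with attribution, from Gursky--Malchiodi \cite{GM} (Proposition~2.5 there; see also Hang--Yang), so your proposal has to be judged as a reconstruction of their argument rather than against anything internal to this paper. Judged that way, the two analytic steps you rely on most heavily have genuine gaps. In step (i), the claimed coercivity $\mathcal{E}[u]\geq c\|u\|_{H^2}^2$ does not follow from a Bochner absorption: the Bochner identity gives $\int_M(\Delta_g u)^2\,d\mu_g=\int_M\left(|\nabla^2u|^2+\Ric_g(\nabla u,\nabla u)\right)d\mu_g$, so it merely trades one indefinite Ricci term for another, and the hypothesis $R_g\geq 0$ gives no pointwise lower bound on $\Ric_g$ with which to absorb it. Positivity of $P_g$ (and triviality of its kernel) under $R_g\geq0$ and $Q_g\geq 0$, $Q_g\not\equiv0$, is itself one of the main theorems of \cite{GM} and \cite{HY2}, not an input available by a two-line estimate. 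In step (ii), which you rightly call the crux, the sketch does not contain the actual mechanism: $P_g$ does not factor through the conformal Laplacian, and $L_gu$ has no reason to be superharmonic, so ``a continuity/superharmonicity argument for $L_gu$'' is an assertion rather than an argument. The key lemma in \cite{GM} is conformal: if $u>0$ and $P_gu\geq0$ (with $R_g\geq0$, $Q_g\geq0$), then the scalar curvature of $u^{\frac{4}{n-4}}g$ is nonnegative, i.e.\ $u^{\frac{n-2}{n-4}}$ is superharmonic (compare \eqref{pos_scal}); this is proved by a test-function argument and then fed into a deformation/continuity scheme, starting from a known positive solution, in which the second-order strong maximum principle excludes a first interior zero. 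Without that lemma the positivity of $G_p$ is not established.

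Your step (iii) is closer in outline to the standard parametrix argument, but the error accounting is not right: after subtracting $\chi\Gamma$ the source term is $E(\chi\Gamma)$, which involves up to four derivatives of $\Gamma$ and is generically of size $|x|^{2-n}$, not $|x|^{4-n}$; naive $W^{4,q}\hookrightarrow L^\infty$ counting on that term would not reach $n=7$. The dimension restriction $n\leq 7$ (or local conformal flatness) comes from the curvature cancellations special to conformal normal coordinates --- $\Ric(p)=0$, $R(p)=0$, and the antisymmetries of the curvature tensor killing the worst-order terms when paired with the radial structure of $\Gamma$ --- exactly as in Lee--Parker's treatment of the conformal Laplacian, so your heuristic based on the integrability of $|x|^{4-n}$ lands on the correct threshold without a correct justification.
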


\section{Pohozaev invariants} 

One often finds integral invariants in geometric variational 
problems. The reader can find a general abstract framework for constructing 
these invariants in the paper by Gover and \O rsted \cite{GO}. In 
future work we will explicitly write out the full Pohozaev 
invariant using the first variation tensor defined in \cite{LY}.


We consider a function 
$$v:(a,b) \times \Ss^{n-1} \rightarrow \R$$ 
satisfying \eqref{cyl_const_q_curv_eq}. Given such a 
function $v$ we define the Hamiltonian functional 
\begin{eqnarray} \label{poho_integrand1} 
\mathcal{H} (v) & = & - \frac{\partial v}{\partial t} \frac{\partial^3 v}{\partial t^3}
+ \frac{1}{2} \left ( \frac{\partial^2 v}{\partial t^2} \right )^2 - \frac{1}{2} (\Delta_\theta v)^2
- \left |\nabla_\theta \frac{\partial v}{\partial t} \right |^2 + \frac{n(n-4)}{4} |\nabla_\theta v|^2 
\\ \nonumber 
&& + \left ( \frac{n(n-4)+8}{4} \right ) \left ( 
\frac{\partial v}{\partial t} \right )^2- \frac{n^2(n-4)^2}{32} v^2 + \frac{(n-4)^2(n^2-4)}{32} 
v^{\frac{2n}{n-4}}.
\end{eqnarray}
Integrating by parts we find 
\begin{equation} \label{poho_integral0}
\frac{d}{dt} \int_{\{ t \} \times \Ss^{n-1}} \mathcal{H} (v) d\theta = 0,
\end{equation} 
which allows us to define our first integral invariant as 
\begin{equation} \label{poho_integral1} 
\widetilde {\mathcal{P}}_{\textrm{rad}} (v) = \int_{\{ t \} \times \Ss^{n-1}} 
\mathcal{H} (v) d\theta. 
\end{equation} 

Now we can define the radial (or dilational) Pohozaev invariants associated to 
a metric $g \in \mathcal{M}_k$ at each puncture point $p_j$. Recall 
that  $g = u^{\frac{4}{n-4}}
\overset{\circ}{g}$ is a complete, conformally flat metric on $\Ss^n \backslash 
\{ p_1, \dots, p_k\}$ with $Q_g = \frac{n(n^2-4)}{8}$ and $R_g \geq 0$. 
Completeness forces 
$$\liminf_{\dist_{\overset{\circ}{g}} (x, p_j) \rightarrow 0} u(x) = \infty$$
 for each $j$, while $Q_g = \frac{n(n^2-4)}{8}$ is equivalent to the PDE 
 \eqref{const_q_curv_eq2}, after stereographic projection 
 down to $\R^n \backslash \{ p_1, \dots, p_k\}$. Choose coordinates centered 
 at one of the punctures $p_j$, and the perform the cylindrical 
 change of variables \eqref{cyl_variables}, which gives us a function 
 $$v: (A,\infty) \times \Ss^{n-1} \rightarrow (0,\infty)$$ 
 satisfying \eqref{cyl_const_q_curv_eq}. We define 
 $$\mathcal{P}_{\textrm{rad}} (g, p_j) = 
 \widetilde{\mathcal{P}}_{\textrm{rad}} (v) = \int_{\{ t \} \times 
 \Ss^{n-1}} \mathcal{H}(v) d\theta,$$ 
 which is well-defined by \eqref{poho_integral0}. 
 
 In the special case that $k=2$ we can evaluate the dilational 
 Pohozaev invariant more explicitly. In this situation we may as well 
 let the puncture points be the north and south poles, and thus we obtain a function 
 $$v = v_\epsilon : \R \times \Ss^{n-1} \rightarrow (0,\infty)$$ 
 satisfying \eqref{del_ode}. Thus the Hamiltonian \eqref{poho_integrand1} 
 reduces to 
 \begin{equation} \label{poho_integrand2} 
 \overline{\mathcal{H}} (v) = -\dot v \dddot v + \frac{1}{2} \ddot v^2 + 
 \left ( \frac{n(n-4)+8}{4} \right ) \dot v^2 - \frac{n^2(n-4)^2}{32} v^2 
 - \frac{(n-4)^2(n^2-4)}{32} v^{\frac{2n}{n-4}}.
 \end{equation} 
 Moreover, because $\overline{\mathcal{H}}$ does not depend on 
 $\theta$ and its integral over a sphere does not depend on $t$, 
 this reduced Hamiltonian must be constant on solutions 
 of \eqref{del_ode}. (One can, of course, explicitly verify this 
 constancy by taking a derivative.)
 
 A computation reveals 
 $$\overline{\mathcal{H}} (v_{\textrm{sph}}) = 0, \qquad 
 \overline{\mathcal{H}} (v_{\epsilon_n}) = -\frac{(n-4)(n^2-4)}{8} 
 \left ( \frac{n(n-4)}{n^2-4} \right )^{n/4} < 0.$$
 Furthermore, Proposition 6 of \cite{vdB} implies the Delaunay 
 solutions are ordered (in fact, uniquely determined!) by their 
 energy \eqref{poho_integrand2}. 
 
 Combining our analysis above with  \eqref{refined_asymp1} 
 and \eqref{refined_asymp2} we immediately see the following 
 \begin{lemma} 
 Let $g = u^{\frac{4}{n-4}}
\overset{\circ}{g}$ be a complete, conformally flat metric on $\Ss^n \backslash 
\{ p_1, \dots, p_k\}$ with $Q_g = \frac{n(n^2-4)}{8}$ and $R_g \geq 0$. For 
each puncture $p_j$ define $\mathcal{P}_{\textrm{rad}}(g,p_j)$ as 
above. Then $\mathcal{P}_{\textrm{rad}}(g,p_j)< 0$ 
and depends only on the necksize $\epsilon_j$ of the Delaunay 
asymptote at $p_j$. Moreover, decreasing $\epsilon_j$ will 
increase $\mathcal{P}_{\textrm{rad}}(g,p_j)$ towards $0$. In 
particular, bounding the radial Pohozaev invariants 
$\mathcal{P}_{\textrm{rad}} (g,p_j)$ away from zero is equivalent to 
bounding the necksizes $\epsilon_j$ away from zero. 
\end{lemma}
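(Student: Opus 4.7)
The plan is to evaluate $\mathcal{P}_{\textrm{rad}}(g,p_j)$ by sending $t\to\infty$ and exploiting the asymptotic expansion \eqref{refined_asymp2}, thereby reducing the invariant to the Hamiltonian of the Delaunay asymptote. Since \eqref{poho_integral0} tells us that the integral $\int_{\{t\}\times\Ss^{n-1}} \mathcal{H}(v)\,d\theta$ is independent of $t$, we are free to compute it in the limit where $v(t,\theta)$ converges to a (translated) Delaunay solution.

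Concretely, write $v(t,\theta) = v_\epsilon(t+T) + w(t,\theta)$, where, by \eqref{refined_asymp2},
\begin{equation*}
w(t,\theta) = e^{-t}\langle \theta, a\rangle\Bigl(-\dot v_\epsilon(t+T) + \tfrac{n-4}{2}v_\epsilon(t+T)\Bigr) + \mathcal{O}(e^{-\beta t}),
\end{equation*}
with analogous bounds on the $t$- and $\theta$-derivatives of $w$. I would expand $\mathcal{H}(v)$ around $v_\epsilon(t+T)$ and separate the contributions into three groups. First, the purely Delaunay part equals $\overline{\mathcal{H}}(v_\epsilon)$ identically in $t,\theta$, since $v_\epsilon$ is radial, so $\int_{\Ss^{n-1}}\mathcal{H}(v_\epsilon(t+T))\,d\theta=|\Ss^{n-1}|\,\overline{\mathcal{H}}(v_\epsilon)$. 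Second, the terms linear in $w$ arising from pairings with $v_\epsilon$ and its $t$-derivatives inherit the angular factor $\langle\theta,a\rangle$ from the leading part of $w$, and hence integrate to zero over $\Ss^{n-1}$; the remainder linear part is controlled by $\mathcal{O}(e^{-\beta t})$. Third, every term quadratic in $w$, together with the $v^{2n/(n-4)}$ nonlinearity expanded past first order, is $\mathcal{O}(e^{-2t})+\mathcal{O}(e^{-(1+\beta)t})+\mathcal{O}(e^{-2\beta t})$, since $w$ and each of its relevant derivatives are $\mathcal{O}(e^{-t})$. (The $\theta$-derivatives of $v_\epsilon$ vanish identically, which is what eliminates several otherwise delicate cross terms.) Sending $t\to\infty$ and combining with $t$-independence of the left-hand side then yields
\begin{equation*}
\mathcal{P}_{\textrm{rad}}(g,p_j)=|\Ss^{n-1}|\,\overline{\mathcal{H}}(v_{\epsilon_j}),
\end{equation*}
which establishes that $\mathcal{P}_{\textrm{rad}}(g,p_j)$ depends only on the necksize $\epsilon_j$.

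For the sign, monotonicity, and the equivalence statement I would rely on what is already assembled above. The boundary values $\overline{\mathcal{H}}(v_{\textrm{sph}})=0$ and $\overline{\mathcal{H}}(v_{\epsilon_n})<0$ are computed in the text, and Proposition 6 of \cite{vdB}, cited just above the lemma, shows that the one-parameter Delaunay family is uniquely parametrized by the energy \eqref{poho_integrand2}; in particular $\epsilon\mapsto\overline{\mathcal{H}}(v_\epsilon)$ is a strictly monotone continuous bijection from $(0,\epsilon_n]$ onto $[\overline{\mathcal{H}}(v_{\epsilon_n}),0)$, with $\overline{\mathcal{H}}(v_\epsilon)\to 0^-$ as $\epsilon\to 0^+$ (approaching the spherical solution) and taking its most negative value at $\epsilon=\epsilon_n$. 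Strict negativity of $\mathcal{P}_{\textrm{rad}}(g,p_j)$, the fact that decreasing $\epsilon_j$ pushes $\mathcal{P}_{\textrm{rad}}$ towards $0$, and the quantitative equivalence of ``bounded away from $0$'' follow immediately.

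The main obstacle is the careful asymptotic bookkeeping in step two: $\mathcal{H}$ is quadratic in four distinct derivative expressions ($\partial_t v,\ \partial_t^2 v,\ \nabla_\theta\partial_t v,\ \Delta_\theta v$) plus the $v^{2n/(n-4)}$ nonlinearity, so there are many cross terms to track. What saves us is the parity argument $\int_{\Ss^{n-1}}\langle\theta,a\rangle\,d\theta=0$ (killing every linear-in-$w$ contribution at the leading order $e^{-t}$) together with the vanishing of $\nabla_\theta v_\epsilon$ and $\Delta_\theta v_\epsilon$, which prevents any $e^{-t}\cdot 1$ survivor arising from $\theta$-derivative pairings. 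Once these cancellations are checked, the pointwise bound on the integrand is uniform in $\theta$ and the limit $t\to\infty$ passes through the integral without further hypotheses.
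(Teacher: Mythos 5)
Your proof is correct and follows the same route as the paper: exploit the $t$-independence of the Pohozaev integral, send $t\to\infty$ and use the refined asymptotics \eqref{refined_asymp2} to reduce the invariant to $|\Ss^{n-1}|\,\overline{\mathcal{H}}(v_{\epsilon_j})$, and invoke van den Berg's energy-ordering result for the sign and monotonicity. You are in fact more careful than the published proof, which simply asserts that $\mathcal{H}(v)\to\overline{\mathcal{H}}(v_{\epsilon_j})$ as $t\to\infty$ without spelling out the parity cancellation $\int_{\Ss^{n-1}}\langle\theta,a\rangle\,d\theta=0$ of the linear-in-$w$ terms or the decay of the quadratic remainder, both of which your bookkeeping supplies.
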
 

\begin{proof} We have shown that 
$$\mathcal{P}_{\textrm{rad}}(g,p_j)
= \int_{\{ t \} \times \Ss^{n-1}} 
\mathcal{H}(v) d\theta$$
is well-defined, because the integral does not depend 
on our choice of $t$. Now let $t \rightarrow \infty$, 
and observe that $v \rightarrow v_{\epsilon_j}$, the 
Delaunay asymptote of $g$ at the puncture $p_j$. In 
particular, $\mathcal{H}(v) \rightarrow \overline{\mathcal{H}}
(v_{\epsilon_j})$, We conclude that 
$$\mathcal{P}_{\textrm{rad}}
(v) = \mathcal{P}_{\textrm{rad}}(v_{\epsilon_j}) = 
|\Ss^{n-1}| \overline{\mathcal{H}}(v_{\epsilon_j}) < 0.$$
The remainder of the lemma follows from the energy 
ordering theorem of van den Berg \cite{vdB} applied to 
the Delaunay solutions, as described in the paragraph above. 
\end{proof}

\begin{rmk} Our  radial Pohozaev invariant is basically the 
same as the one defined in Proposition 4.1 of \cite{ADM}.
Jin and Xiong \cite{JX} write out the same invariant for higher order 
equations. 
\end{rmk} 

It will actually be useful for later computations to decompose the 
Hamiltonian energy $\mathcal{H}$ given in \eqref{poho_integrand1} 
as 
\begin{equation} \label{poho_integrand3} 
\mathcal{H} (v) = \mathcal{H}_{\textrm{cyl}}(v) + 
\frac{(n-4)^2(n^2-4)}{32} v^{\frac{2n}{n-4}}. 
\end{equation} 
The same computation as in \eqref{poho_integral1} 
shows the following lemma. 
\begin{lemma} \label{poho_general_const} 
Let $v$ satisfy 
$$v: (a,b) \times \Ss^{n-1} \rightarrow \R, \qquad 
P_{\textrm{cyl}}v = A v^{\frac{n+4}{n-4}}$$ 
for some constant $A$. Then the integral 
$$\int_{\{ t \} \times \Ss^{n-1}} 
\mathcal{H}_{\textrm{cyl}} (v) + \frac{(n-4)}{2n} A
v^{\frac{2n}{n-4}} d\theta$$ 
does not depend on $t$. 
\end{lemma}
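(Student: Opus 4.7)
The strategy is to mirror the integration-by-parts computation that yielded \eqref{poho_integral0}, keeping careful track of how the nonlinearity coefficient $A$ enters. The heart of the matter is the following pairing identity, valid for \emph{any} smooth function $v: (a,b) \times \Ss^{n-1} \to \R$ with no PDE hypothesis imposed:
\[
\frac{d}{dt} \int_{\{t\} \times \Ss^{n-1}} \mathcal{H}_{\textrm{cyl}}(v)\, d\theta \;=\; - \int_{\{t\} \times \Ss^{n-1}} (\partial_t v)\, P_{\textrm{cyl}} v\, d\theta.
\]
This is really just the statement that $\mathcal{H}_{\textrm{cyl}}$ is the density of the quadratic Hamiltonian associated to the self-adjoint operator $P_{\textrm{cyl}}$ given in \eqref{cyl_paneitz_op}.

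My first step is to verify the pairing identity by differentiating $\mathcal{H}_{\textrm{cyl}}(v)$ term by term in $t$, integrating over $\Ss^{n-1}$, and applying Green's identities (the sphere has no boundary, so all boundary contributions drop). Pairs of terms such as $-\partial_t v\, \partial_t^3 v$ and $\tfrac12 (\partial_t^2 v)^2$ combine to produce the fourth-order $t$-derivative, the $(\Delta_\theta v)^2$ and $|\nabla_\theta \partial_t v|^2$ terms produce the mixed and pure tangential fourth-order pieces, and the remaining quadratic terms recover the lower-order coefficients. Matching against \eqref{cyl_paneitz_op} with $\partial_t v$ extracted as a common factor yields the claimed identity. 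This is essentially the same manipulation that underlies \eqref{poho_integral0}, and is where nearly all of the computational content sits.

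Once the pairing identity is in hand, the remainder is routine. Substituting the PDE hypothesis $P_{\textrm{cyl}} v = A v^{(n+4)/(n-4)}$ and invoking the elementary chain rule
\[
(\partial_t v)\, v^{\frac{n+4}{n-4}} \;=\; \frac{n-4}{2n}\, \partial_t \!\left( v^{\frac{2n}{n-4}} \right)
\]
converts the right-hand side into a total $t$-derivative, giving
\[
\frac{d}{dt} \int_{\{t\} \times \Ss^{n-1}} \mathcal{H}_{\textrm{cyl}}(v)\, d\theta \;=\; -\frac{A(n-4)}{2n}\, \frac{d}{dt} \int_{\{t\} \times \Ss^{n-1}} v^{\frac{2n}{n-4}}\, d\theta,
\]
which rearranges to the statement of the lemma.

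The only real obstacle is sign and coefficient bookkeeping during the integrations by parts. A clean consistency check is available: specializing to $A = \frac{n(n-4)(n^2-4)}{16}$ and using the identity $\frac{n-4}{2n}\cdot\frac{n(n-4)(n^2-4)}{16} = \frac{(n-4)^2(n^2-4)}{32}$, the conserved density collapses via the decomposition \eqref{poho_integrand3} to $\mathcal{H}(v)$, recovering \eqref{poho_integral0}. Since $P_{\textrm{cyl}}$ does not depend on $A$, the pairing identity, and with it the whole conservation law, generalizes verbatim to arbitrary $A$.
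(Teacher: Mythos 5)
Your proposal is correct and follows the same route the paper intends: the paper's ``proof'' is literally the one sentence \emph{``The same computation as in \eqref{poho_integral1} shows the following lemma,''} and what you have written is exactly that computation, organized cleanly around the pairing identity $\frac{d}{dt}\int_{\Ss^{n-1}}\mathcal{H}_{\textrm{cyl}}(v)\,d\theta=-\int_{\Ss^{n-1}}(\partial_t v)\,P_{\textrm{cyl}}v\,d\theta$ followed by the chain rule $(\partial_t v)\,v^{\frac{n+4}{n-4}}=\frac{n-4}{2n}\partial_t\bigl(v^{\frac{2n}{n-4}}\bigr)$. One small caveat worth flagging: for the pairing identity to hold term-for-term against \eqref{cyl_paneitz_op}, the angular-gradient terms in $\mathcal{H}_{\textrm{cyl}}$ must appear as $+|\nabla_\theta\partial_t v|^2-\frac{n(n-4)}{4}|\nabla_\theta v|^2$ (opposite signs from what is printed in \eqref{poho_integrand1}, which appears to contain a typo), so if you carry the bookkeeping out in full you should use those signs.
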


\section{Proof of the compactness theorem}

In this section we prove Theorem \ref{cmptness_thm}. 
We first use standard blow-up techniques to prove {\it a priori} bounds on the 
$\mathcal{C}^4$-norm of solutions of \eqref{gen_sing_q_curv}. Once we obtain 
these bounds, we use them to extract a convergent subsequence. Finally we 
prove that our limit is non-trivial, using the fact that the radial Pohozaev 
invariants of our original sequence 
of metrics remain bounded away from zero. 

\subsection{{\it A priori} bounds} 

We prove some {\it a priori} bounds for solutions of 
$$\overset{\circ}{P} u = \frac{n(n-4)(n^2-4)}{16} 
u^{\frac{n+4}{n-4}}.$$ 

\begin {thm} \label{upper_bnd} 
Let $n \geq 5$, let $\Lambda \subset \Ss^n$ be a proper
closed set, and let $g = u^{\frac{4}{n-4}} \overset{\circ}{g}$
be a complete metric on $\Ss^n \backslash \Lambda$ such that 
$$Q_g = \frac{n(n^2-4)}{8}, \qquad R_g \geq 0.$$ 
Then there exists $C>0$ depending only on the dimension $n$ such that 
\begin{equation} \label{apriori_upper_bnd} 
u(x) \leq C \left ( \operatorname{dist}_{\overset{\circ}{g}} (x, \Lambda)\right )^{\frac{4-n}{2}} .
\end{equation} 
\end{thm}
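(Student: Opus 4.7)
I would argue by contradiction using a standard Schoen-style blow-up/rescaling argument, combining a Liouville-type classification for the Paneitz equation with the convex-boundary property in Theorem~\ref{convex_bndry_lemma}. Suppose \eqref{apriori_upper_bnd} fails. Then there is a sequence of metrics $g_k = u_k^{4/(n-4)}\overset{\circ}{g}$ on $\Ss^n\setminus\Lambda_k$ satisfying the hypotheses, together with points $x_k\in\Ss^n\setminus\Lambda_k$ such that
$$M_k \,:=\, u_k(x_k)^{2/(n-4)}\,\dist_{\overset{\circ}{g}}(x_k,\Lambda_k)\longrightarrow\infty.$$
A Schoen-type point-picking step would replace $x_k$ by almost-maximal points $\bar x_k$ at which the natural scale $\mu_k := u_k(\bar x_k)^{-2/(n-4)}$ satisfies $\mu_k/\dist_{\overset{\circ}{g}}(\bar x_k,\Lambda_k)\to 0$ and on which $u_k$ is comparable to $u_k(\bar x_k)$ on balls of any fixed radius in the $\mu_k$-rescaled chart.

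Working in stereographic coordinates on $\R^n$ and setting $\tilde u_k(y):=\mu_k^{(n-4)/2}(U_{\textrm{sph}}u_k)(\bar x_k+\mu_k y)$, the scale-invariance identity~\eqref{scaling_law} gives
$$\Delta_0^2\tilde u_k \,=\, \frac{n(n-4)(n^2-4)}{16}\,\tilde u_k^{(n+4)/(n-4)}$$
on domains exhausting $\R^n$, with $\tilde u_k(0)=1$ and uniform upper bounds on compact sets. Since \eqref{pos_scal} is scale-invariant, the $\tilde u_k$ also satisfy the positive scalar curvature inequality. The Harnack inequality (Theorem~\ref{harnack_inequality}) then supplies matching uniform lower bounds, and standard $W^{4,p}$ and Schauder estimates for the bi-Laplacian produce a subsequence converging in $C^{4,\alpha}_{\mathrm{loc}}(\R^n)$ to a positive limit $u_\infty$ satisfying both the Paneitz equation and the inequality \eqref{pos_scal} on all of $\R^n$, with $u_\infty(0)=1$. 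The Liouville-type classification of C.-S.~Lin \cite{Lin} then forces $u_\infty$ to be a standard bubble, so $g_\infty := u_\infty^{4/(n-4)}\delta$ is isometric to the round sphere minus one point.

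To derive the contradiction I would apply Theorem~\ref{convex_bndry_lemma} to $g_k$. For each fixed $R>0$ and $k$ sufficiently large, the round-geodesic ball of radius $R\mu_k$ about $\bar x_k$ is contained in $\Ss^n\setminus\Lambda_k$, so its boundary has positive mean curvature with respect to $g_k$ (inward normal). Rescaling by $\mu_k$ converts this into positive mean curvature of $\partial B_R(0)\subset(\R^n,\tilde g_k)$, and passing to the limit yields non-negative mean curvature of every Euclidean sphere $\partial B_R(0)$ with respect to $g_\infty$. But because $g_\infty$ is (up to isometry) a round sphere, $\partial B_R(0)$ corresponds to a geodesic sphere in $\Ss^n$; once $R$ is large enough this sphere bounds a geodesic ball exceeding a hemisphere, so its boundary has strictly negative mean curvature with the relevant inward normal --- the desired contradiction.

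The delicate part, I expect, is the opening point-picking step and the careful bookkeeping of scales, in particular verifying that the rescaled round-geodesic balls are eventually contained in the complement of the rescaled singular set and that the conformal transformation law for mean curvature yields the correct sign in the limit, so that ``inward normal'' in the original metric matches ``inward normal'' for the Euclidean ball in the blow-up limit. The blow-up and Liouville steps themselves are by now fairly standard in this circle of problems, but matching sign conventions in the final geometric step and invoking the right form of Lin's classification (to cope with $u_\infty$ being genuinely entire rather than merely of finite energy) require some care.
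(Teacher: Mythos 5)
Your proposal is correct and follows essentially the same route as the paper's proof: Schoen-style point selection (which the paper implements concretely via the auxiliary function $f(x) = (\rho - \operatorname{dist}_{\overset{\circ}{g}}(x,x_0))^{\frac{n-4}{2}} u(x)$), blow-up at the resulting almost-maximal points, convergence to a Lin bubble via the classification theorem, and a contradiction with the Chang--Han--Yang positive-mean-curvature result (Theorem~\ref{convex_bndry_lemma}) obtained by exhibiting a mean-concave geodesic sphere for the rescaled metric. The paper carries out the final step by computing the mean curvature of geodesic spheres explicitly in stereographic coordinates rather than by appealing to the hemisphere picture, but the mechanism of the contradiction is the same.
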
 

\begin{rmk} In the context of $g \in \mathcal{M}_k$ with $k \geq 2$, 
our upper bound \eqref{apriori_upper_bnd} is 
very similar to, but slightly stronger than, Proposition 3.2 of \cite{JX}, 
because our constant $C$ depends only on the dimension $n$. 
\end{rmk} 

\begin {proof} Our proof borrows from Pollack's proof of 
the corresponding upper bound in the scalar curvature case. 

Given any $g$ satisfying the hypotheses of Theorem \ref{upper_bnd}, 
$x_0 \not \in \Lambda$, and $\rho >0$ such that $\overset{\circ}{\B}_\rho
\subset \Ss^n \backslash \Lambda$ we define the auxiliary 
function 
\begin{equation} \label{help_fnc1}
f: \overset{\circ}{\B}_\rho \rightarrow \R, \qquad 
f(x) = (\rho- \operatorname{dist}_{\overset{\circ}{g}} 
(x, x_0))^{\frac{n-4}{2}} u(x) .
\end{equation} 
Observe that choosing $\rho = \frac{1}{2} 
\operatorname{dist}_{\overset{\circ}{g}} (x_0, \Lambda)$ 
yields 
\begin{equation} \label{help_fnc2} 
f(x_0) = \rho^{\frac{n-4}{2}} u(x_0) = \left ( \frac{1}{2} 
\operatorname{dist}_{\overset{\circ}{g}} (x_0, \Lambda) \right
)^{\frac{n-4}{2}} u(x_0) ,
\end{equation} 
so it will suffice to find $C$ depending only on $n$ such that 
$f(x) \leq C$ for all admissible choices of $\Lambda$, $u$, 
$x_0$, and $\rho$. 

We suppose the contrary and derive a contradiction. To this end, 
let $\Lambda_i$, $u_i$, $x_{0,i}$ and $\rho_i$ be admissible 
as described above and suppose 
\begin{equation} \label{blow_up1}
M_i = f(x_{1,i}) = \sup_{x \in \overset{\circ}{\B}_{\rho_i}(x_{0,i})} f(x) 
\rightarrow \infty . 
\end {equation} 
Observe that $\left. f \right |_{\partial \overset{\circ}{\B}_{\rho_i} 
(x_{0,i})} = 0$, so $x_{1,i}$ must lie in the interior of 
the ball $\overset{\circ}{\B}_{\rho_i} (x_{0,i})$. 
Next let 
$$r_i = \rho_i - \operatorname{dist}_{\overset{\circ}{g}}
(x_{1,i}, x_{0,i}),$$ 
let $y$ be geodesic normal coordinates centered at $x_{1,i}$, 
and define 
\begin{equation} \label{blow_up2} 
\lambda_i = 2 (u_i(x_{1,i}))^{\frac{4-n}{2}}, \qquad 
R_i = \frac{r_i}{\lambda_i} = \frac{r_i}{2} (u_i (x_{1,i}))^{\frac{n-4}{2}} 
= \frac{1}{2} M_i^{\frac{2}{n-4}} 
\end{equation} 
and 
\begin{equation} \label{blow_up3} 
w_i : \B_{R_i} (0) \rightarrow (0,\infty), \qquad w_i (y) = 
\lambda_i^{\frac{n-4}{2}} u_i(\lambda y).
\end{equation} 
By \eqref{trans_rule1} (or, equivalently, \eqref{scaling_law}) the function $w_i$ 
solves 
$$P_{\lambda \overset{\circ}{g}}w_i = \frac{n(n-4)(n^2-4)}{16} 
w_i ^{\frac{n+4}{n-4}}. $$
Moreover, by construction 
$$2^{\frac{n-4}{2}} = w_i(0) = \sup_{\B_{R_i}(0)} w_i(x) .$$
Using the Arzela-Ascoli theorem we extract a subsequence, which 
we still denote by $w_i$, that converges uniformly on 
compact subsets of $\R^n$. Furthermore, as $i \rightarrow 
\infty$ the rescaled metrics $\lambda \overset{\circ}{g}$ 
converge to the Euclidean metric. Therefore, in the limit we 
obtain a function 
\begin{equation} \label{blow_up4} 
\overline w : \R^n \rightarrow [0,\infty) , \quad \Delta_0^2 \overline{w} 
= \frac{n(n-4)(n^2-4)}{16} \overline{w}^{\frac{n+4}{n-4}}, \quad 
\overline{w}(0) = \sup \overline{w} = 2^{\frac{n-4}{2}}.
\end{equation}
By the classification theorem in \cite[Theorem~1.3]{Lin} we must have 
$$\overline w(x) = \left ( \frac{1+|x|^2}{2} \right )^{\frac{4-n}{2}}.$$

Thus each solution $u_i$ has a \quotes{bubble} when $i$ is 
sufficiently large, that is for $i$ 
sufficiently large a small neighborhood of $x_{1,i}$ is close
(in $\mathcal{C}^4$-norm) to the round metric, and hence 
has a concave boundary. We verify this by computing the 
mean curvature of a geodesic sphere explicitly. The round metric 
has the form $g_{lm} = \frac{4}{(1+|x|^2)^2} \delta_{lm}$ in stereographic 
coordinates, and 
in general the mean curvature of a hypersurface $\Sigma$ in a Riemannian 
manifold with unit normal $\eta$  is given by 
\begin {equation} \label{gen_mean_curv} 
H_\Sigma = - \operatorname{tr}_g \langle \nabla_{\partial l} \eta, \partial_m \rangle 
= -\partial_l \eta^l - \eta^p \Gamma_{lp}^l.\end{equation} 
A geodesic sphere centered at $0$ coincides with a Euclidean round sphere 
centered at the origin (with a different radius, of course), and so the inward 
normal vector is 
$$\eta = - \left ( \frac{1+|x|^2}{2|x|} \right ) x^l \partial_l .$$
A computation shows 
$$H = -2n|x|(1+|x|^2)  + \frac{ n-1 + n|x|^2}{|x|},$$
which is negative, in particular, when $|x|>3$. Additionally, since 
$\| w_i - \overline{w}\|_{\mathcal{C}^4(\B_{\frac{3R_i}{4}} (0))}$ is arbitrarily 
small when $i$ is sufficiently large, we see that $\partial \B_{\frac{3R_i}{4}}
(0)$ is also mean concave with respect to the metric $w_i^{\frac{4}{n-4}}
\delta_{lm}$, which in turn implies $\partial \B_{\frac{3|x_{1,i}|}{8}} (x_{1,i})$ is mean concave 
with respect to the metric induced by $u_i^{\frac{4}{n-4}} \delta_{lm}$. This 
contradicts Theorem \ref{convex_bndry_lemma}.
\end{proof}

We immediately obtain the following Corollary. 
\begin{cor} \label{upper_bnd2}
For each compact subset $\Omega \subset 
\Ss^n \backslash \Lambda$, $l \in \mathbb{N}$ and $\alpha \in (0,1)$ 
there exists $C_1$ 
depending only on $n$, $l$, $\Omega$, and $\alpha$ such that 
\begin{equation} \label{apriori_upper_bnd2}
\| u_i  \|_{\mathcal{C}^{l,\alpha} (\Omega)} \leq C_1.
\end{equation} 
\end{cor}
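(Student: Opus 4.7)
The plan is to deduce the higher-order bounds from the pointwise upper bound of Theorem \ref{upper_bnd} by a standard elliptic bootstrap. Since $\Omega \subset \Ss^n\setminus\Lambda$ is compact, choose a slightly larger compact set $\Omega'$ with $\Omega \subset \mathrm{int}(\Omega') \subset \Omega' \subset \Ss^n\setminus\Lambda$, so that $d:=\operatorname{dist}_{\overset{\circ}{g}}(\Omega', \Lambda) > 0$. Theorem \ref{upper_bnd} applied to each $u_i$ gives
\[
\sup_{\Omega'} u_i \leq C\, d^{(4-n)/2},
\]
with $C$ depending only on $n$; this is the uniform $L^\infty$-bound on $\Omega'$ that starts the bootstrap.

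Next I would feed this into the PDE $\overset{\circ}{P} u_i = \tfrac{n(n-4)(n^2-4)}{16}\, u_i^{(n+4)/(n-4)}$. The Paneitz operator on $(\Ss^n,\overset{\circ}{g})$ is a fourth-order uniformly elliptic operator with smooth coefficients, so the right-hand side is bounded in $L^\infty(\Omega')$ independently of $i$. Standard interior $L^p$ estimates for $\overset{\circ}{P}$ then give, for any $p \in (1,\infty)$, a uniform bound on $\|u_i\|_{W^{4,p}(\Omega'')}$ on a slightly smaller set $\Omega''$ with $\Omega \Subset \Omega''\Subset \Omega'$, with constants depending only on $n$, $p$, and the geometry of $\Omega'\supset\Omega''$. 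Choosing $p$ large and applying the Morrey embedding $W^{4,p}\hookrightarrow \mathcal{C}^{3,\alpha}$ yields a uniform $\mathcal{C}^{3,\alpha}(\Omega'')$-bound.

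Now the right-hand side $u_i^{(n+4)/(n-4)}$ lies in $\mathcal{C}^{3,\alpha}(\Omega'')$ uniformly (the map $u\mapsto u^{(n+4)/(n-4)}$ is smooth on $(0,\infty)$, and the $u_i$ are positive with uniform $L^\infty$ upper bounds on $\Omega''$; the power is applied pointwise). Interior Schauder estimates for the fourth-order operator $\overset{\circ}{P}$ then upgrade this to a uniform $\mathcal{C}^{4,\alpha}$-bound on a yet smaller neighborhood of $\Omega$. Iterating the Schauder step finitely many times, each time shrinking the domain slightly but still containing $\Omega$, produces uniform $\mathcal{C}^{l,\alpha}(\Omega)$-bounds for any prescribed $l$. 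All constants depend only on $n$, $l$, $\alpha$, and the chosen nested neighborhoods of $\Omega$, hence ultimately only on $n$, $l$, $\alpha$, and $\Omega$ (through $d = \operatorname{dist}_{\overset{\circ}{g}}(\Omega,\Lambda)$ and the geometry of $(\Ss^n,\overset{\circ}{g})$).

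There is no real obstacle here; the only mild subtlety is that the nonlinearity $u^{(n+4)/(n-4)}$ has a non-integer exponent, which is why a strictly positive pointwise lower bound is implicit when differentiating the nonlinearity in the higher-regularity steps. But positivity of $u_i$ is automatic from the problem, and the bootstrap only uses smoothness of $s\mapsto s^{(n+4)/(n-4)}$ on compact subsets of $(0,\infty)$, together with the uniform $L^\infty$ ceiling coming from Theorem \ref{upper_bnd}; no uniform positive lower bound on $u_i$ is required for the $\mathcal{C}^{l,\alpha}$ estimate itself.
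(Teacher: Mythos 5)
Your bootstrap is the standard argument and matches what the paper means by \quotes{immediately}: the $L^\infty$ ceiling from Theorem \ref{upper_bnd} controls the right-hand side $u_i^{(n+4)/(n-4)}$ in $L^\infty$ on a slightly larger compact set; interior $W^{4,q}$ estimates for the fourth-order elliptic operator $\overset{\circ}{P}$ together with Morrey's embedding then give uniform $\mathcal{C}^{3,\gamma}$ bounds on a shrunken set; and since $s\mapsto s^{(n+4)/(n-4)}$ is Lipschitz on $[0,M]$ (the exponent exceeds $1$), the right-hand side is uniformly $\mathcal{C}^{0,\gamma}$ and Schauder delivers $\mathcal{C}^{4,\gamma}$. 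Up to that point the constants depend only on $n$, $\gamma$, $\Omega$, and $\dist(\Omega,\Lambda)$, exactly as claimed.

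Your last paragraph, however, overstates matters. Writing $p=(n+4)/(n-4)$, the $k$-th derivative of $s\mapsto s^p$ carries a factor $s^{p-k}$, which is unbounded near $s=0$ as soon as $k>p$; for $n>12$ already $p<2$, so the second derivative of the nonlinearity blows up at the origin. Consequently $\|u_i^p\|_{\mathcal{C}^{k,\alpha}}$ is \emph{not} controlled by the $L^\infty$ ceiling and $\|u_i\|_{\mathcal{C}^{k+1,\alpha}}$ alone once $k$ exceeds roughly $p$, and a uniform positive lower bound on the $u_i$ genuinely enters the bootstrap for large $l$. Neither Theorem \ref{upper_bnd} nor the Harnack inequality of Theorem \ref{harnack_inequality} supplies such a lower bound by itself (the latter only compares $\sup$ and $\inf$), and your appeal to smoothness of $s^p$ on \quotes{compact subsets of $(0,\infty)$} already tacitly presupposes one, so the closing sentence contradicts what precedes it. This does not affect how the corollary is actually used in the paper --- passing to the limit in the PDE and in the third-order Pohozaev integrand needs only $\mathcal{C}^4$ control, for which the first two rungs above suffice using nothing more than $p\geq 1$ --- but as written your justification of the full $\mathcal{C}^{l,\alpha}$ claim for every $l$ has a gap.
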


We also record here a lower bound due to Jin and Xiong \cite[Theorem~1.3]{JX}. 
\begin{thm} [Jin and Xiong] \label{rmbl_sing} 
Let 
$$v:[A,\infty) \times \Ss^{n-1} \rightarrow (0,\infty)$$ 
solve \eqref{cyl_const_q_curv_eq}. Then $\mathcal{P}_{\textrm{rad}}
(v) \leq 0$ with equality if and only if 
$$\liminf_{t \rightarrow \infty} v(t,\theta) = \limsup_{t \rightarrow \infty} 
v(t,\theta) = \lim_{t \rightarrow \infty} v(t,\theta) = 0.$$
Otherwise, if $\mathcal{P}_{\textrm{rad}} (v) < 0$, there exists $C_2> 0$
(which depends on the solution $v$!) such that $v(t,\theta) \geq C_2$. 
\end{thm}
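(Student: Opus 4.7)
The plan is to establish a dichotomy for the asymptotic behavior of $v$ as $t \to \infty$, then read off all three assertions by evaluating the $t$-independent integral $\int_{\{t\} \times \Ss^{n-1}} \mathcal{H}(v) \, d\theta$ in the limit. Specifically, I claim either $v(t, \theta) \to 0$ uniformly on $\Ss^{n-1}$, or else $v(t, \cdot) - v_{\epsilon}(\cdot + T)$ goes to zero uniformly for some $\epsilon \in (0, \epsilon_n]$ and phase $T$. To prove this, I would pull back via the inverse of \eqref{cyl_variables} to obtain a positive solution $u$ of \eqref{const_q_curv_eq2} on a punctured neighborhood of the origin. If the singularity is removable, then $u$ extends smoothly across $0$ and the coordinate relation $v(t,\theta) = (e^{t}\cosh t)^{(4-n)/2} u(e^{-t} \theta)$ yields exponential decay of $v$. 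Otherwise, the asymptotic symmetry statement \eqref{asymp_symm1} of \cite{JX} combined with the refined expansion \eqref{refined_asymp2} of \cite{AO, R} forces $v$ to converge to a translated Delaunay profile.

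Given the dichotomy, I evaluate $\mathcal{P}_{\textrm{rad}}(v)$ as $t \to \infty$, using \eqref{poho_integral0} to ensure the integral is independent of $t$. Standard elliptic regularity for the fourth-order equation \eqref{cyl_const_q_curv_eq} promotes uniform convergence of $v$ to $\mathcal{C}^{4}$-convergence on each slice $\{t\} \times \Ss^{n-1}$, which controls every term in the Hamiltonian density \eqref{poho_integrand1}. In the decay case, every term of $\mathcal{H}(v)$ vanishes in the limit, so $\mathcal{P}_{\textrm{rad}}(v) = 0$. In the Delaunay case, $\mathcal{H}(v(t, \cdot)) \to \overline{\mathcal{H}}(v_{\epsilon})$ uniformly (the $\theta$-derivative terms drop out since $v_\epsilon$ is radial), so $\mathcal{P}_{\textrm{rad}}(v) = |\Ss^{n-1}|\, \overline{\mathcal{H}}(v_{\epsilon}) < 0$ by the explicit sign established just before Lemma~\ref{poho_general_const}. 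This simultaneously proves $\mathcal{P}_{\textrm{rad}}(v) \leq 0$ and its equality characterization, because the decay case is precisely the condition $\lim_{t \to \infty} v(t, \theta) = 0$.

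The lower bound in the final assertion then follows automatically: if $\mathcal{P}_{\textrm{rad}}(v) < 0$ we must be in the Delaunay case, and since $v_\epsilon \geq \epsilon$ together with uniform convergence $v(t, \theta) \to v_\epsilon(t + T)$ yields $v(t, \theta) \geq \epsilon/2$ for $t$ sufficiently large; the positivity and continuity of $v$ on the remaining compact set $[A, T_0] \times \Ss^{n-1}$ supply a uniform positive constant $C_2$ there. The main obstacle I anticipate is the dichotomy step itself: one has to rule out pathological oscillatory scenarios in which $\liminf v = 0$ while $\limsup v > 0$, so that $v$ neither decays to zero nor tracks a single Delaunay profile. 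This is exactly the heart of the cited asymptotic analysis, and in invoking those references one must also verify that their standing hypotheses (notably the superharmonic/positive-scalar condition underlying the refined expansion) translate to the hypotheses of the present theorem, which only assumes positivity of $v$ and the cylindrical equation.
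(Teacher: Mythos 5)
The paper does not prove Theorem~\ref{rmbl_sing} at all: it is quoted verbatim (up to notational adaptation) from \cite[Theorem~1.3]{JX}, under the heading ``Some other useful theorems.'' There is therefore no ``paper's own proof'' to compare against, and your proposal is necessarily a reconstruction of the cited argument rather than an alternative to something in the text.

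As a reconstruction, your sketch has the right architecture. The dichotomy (removable singularity with decay versus convergence to a translated Delaunay), followed by evaluating the $t$-independent integral $\int_{\{t\}\times\Ss^{n-1}}\mathcal{H}(v)\,d\theta$ along the asymptotics, is indeed what the Jin--Xiong proof does, and the arithmetic at the end ($\mathcal{P}_{\textrm{rad}}(v)=0$ in the decay case, $\mathcal{P}_{\textrm{rad}}(v)=|\Ss^{n-1}|\,\overline{\mathcal{H}}(v_\epsilon)<0$ in the Delaunay case, hence $v\geq C_2>0$) is correct. You are also right that uniform $C^0$ convergence plus interior elliptic estimates for the fourth-order equation promote to the $C^4$ convergence needed to pass each term of \eqref{poho_integrand1} to the limit.

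The one genuine weakness you have already put your finger on, and I will only sharpen it: the hypotheses of Theorem~\ref{rmbl_sing} as stated in the paper are \emph{not} sufficient to invoke the asymptotic symmetry result \eqref{asymp_symm1} or the refined expansion \eqref{refined_asymp2}. Both of those require the solution to be superharmonic in Euclidean coordinates (equivalently $R_g\geq 0$, cf.\ \eqref{pos_scal}), and the biharmonic equation alone does not force superharmonicity. This omission is really a small imprecision in how the paper quotes Jin--Xiong rather than a flaw you introduced, but a complete proof would have to either add $-\Delta_0(U_{\textrm{sph}}u)\geq 0$ to the hypotheses or supply an argument that it holds automatically for the solutions under consideration; the paper's applications all take place for metrics with $R_g\geq 0$, so in context the hypothesis is available. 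Beyond that, be aware that your ``dichotomy'' step is not an ingredient feeding into the theorem so much as a restatement of the theorem's content: once you know that every positive (superharmonic) singular solution either extends or is asymptotic to a Delaunay with necksize in $(0,\epsilon_n]$, the claim about $\mathcal{P}_{\textrm{rad}}$ is a short computation, so most of the work is hidden in the citation.
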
 

\begin{cor} Let $g = u^{\frac{4}{n-4}} \overset{\circ}{g} \in \mathcal{M}_k$
have the singular set $\Lambda = \{ p_1, \dots, p_k\}$. 
Then there exists $C_2>0$ (depending on the solution $u$!) such that 
$$u(x) \geq C_2 \left(\min_{1 \leq j \leq k} 
\operatorname{dist}_{\overset{\circ}{g}} (x,p_j) \right)^{\frac{4-n}{2}}.$$
\end{cor}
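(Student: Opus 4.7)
The plan is to reduce the desired lower bound to the Jin--Xiong removable-singularity statement (Theorem~\ref{rmbl_sing}) applied near each puncture, using that $g\in\calM_k$ forces the radial Pohozaev invariant to be \emph{strictly} negative at every $p_j$ (the content of the preceding lemma). Once a uniform cylindrical lower bound is in hand near each puncture, the desired bound in terms of round distance falls out of \eqref{cyl_variables}, and a compactness argument covers the region away from $\Lambda$.

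More precisely, for a fixed puncture $p_j$ I would choose a stereographic chart sending $p_j$ to the origin and then apply the cylindrical change of variables \eqref{cyl_variables}, obtaining a function $v_j:[A_j,\infty)\times\Ss^{n-1}\to(0,\infty)$ that solves \eqref{cyl_const_q_curv_eq}, with $A_j$ large enough that the half-cylinder $\{t\ge A_j\}$ contains no other punctures. By the preceding lemma, $\widetilde{\calP}_{\textrm{rad}}(v_j)=\calP_{\textrm{rad}}(g,p_j)<0$, so Theorem~\ref{rmbl_sing} supplies a constant $C_2^{(j)}>0$ (depending on $v_j$, hence on $u$) with $v_j(t,\theta)\ge C_2^{(j)}$ on $[A_j,\infty)\times\Ss^{n-1}$. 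Inverting \eqref{cyl_variables} and using that the conformal factor appearing there scales like $|x|^{\frac{4-n}{2}}$ as $t\to\infty$, together with the comparability of the Euclidean distance in the chart and the round distance $\dist_{\overset{\circ}{g}}(\cdot,p_j)$ near $p_j$, yields a radius $r_j>0$ and a constant $c_j>0$ such that
$$u(x)\ge c_j\bigl(\dist_{\overset{\circ}{g}}(x,p_j)\bigr)^{\frac{4-n}{2}}\qquad \text{for every } x\in \overset{\circ}{\B}_{r_j}(p_j)\setminus\{p_j\}.$$

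To patch, one takes the $r_j$ small enough that $\dist_{\overset{\circ}{g}}(x,p_j)$ coincides with $\min_{l}\dist_{\overset{\circ}{g}}(x,p_l)$ throughout $\overset{\circ}{\B}_{r_j}(p_j)$, so each near-puncture bound is already of the required form. On the compact set $K=\Ss^n\setminus\bigcup_j \overset{\circ}{\B}_{r_j/2}(p_j)$ the function $u$ is smooth and strictly positive, hence $u\ge m>0$ there, while $\min_l\dist_{\overset{\circ}{g}}(x,p_l)^{\frac{4-n}{2}}$ is uniformly bounded above on $K$ (the exponent being negative); taking $C_2$ to be the minimum of the $c_j$'s and of $m$ divided by this upper bound then delivers the claimed inequality on all of $\Ss^n\setminus\Lambda$. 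I do not foresee any real obstacle: the substance of the corollary is entirely carried by the preceding lemma and Theorem~\ref{rmbl_sing}, and the remaining steps amount to routine bookkeeping of the change of variables \eqref{cyl_variables}.
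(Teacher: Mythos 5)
The paper states this corollary without a proof, treating it as an immediate consequence of the preceding lemma (which gives $\mathcal{P}_{\textrm{rad}}(g,p_j) < 0$ at every puncture of a complete metric) together with Theorem~\ref{rmbl_sing}. Your argument is correct and is precisely the intended chain of implications: the Pohozaev invariant is strictly negative at each $p_j$, Theorem~\ref{rmbl_sing} gives a positive lower bound for the cylindrical function $v_j$ on a half-cylinder, inverting \eqref{cyl_variables} (with $U_{\textrm{sph}}$ bounded between positive constants near the origin and Euclidean distance comparable to $\dist_{\overset{\circ}{g}}(\cdot,p_j)$ in the chart) yields the desired power of the distance near each puncture, and compactness of $\Ss^n\setminus\bigcup_j\overset{\circ}{\B}_{r_j/2}(p_j)$ handles the rest.
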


\subsection{Sequential compactness}

In this section we complete our proof of 
sequential compactness. 
To this end, let 
$$\{ g_i  = u_i^{\frac{4}{n-4}} \overset{\circ}{g} \} \subset 
\Omega_{\delta_1, \delta_2} \subset\mathcal{M}_k$$
and denote the singular set  of the conformal factor 
$u_i$ by $\Lambda_i = \{ p_1^i , \dots, p_k^i\}$.

The following lemma will simplify our later analysis since it allows us to 
assume the singular points are fixed. 

\begin{lemma} \label{conv_punctures}
Let $g_i = u_i^{\frac{4}{n-4}} \overset{\circ}{g}$
be a sequence in $\mathcal{M}_k$ as described above. After passing to a 
subsequence, we may assume that when 
$i$ is sufficiently large both $g_i$ and $u_i$ are regular on the 
$$\Ss^n \backslash \left ( \cup_{j=1}^k \overset{\circ}{\B}_{\delta_1/2}
(p_j^i) \right ),$$
where $\overset{\circ}{\B}_r(p)$ is the geodesic ball centered at 
$p$ with radius $r$, with respect to the round metric $\overset{\circ}{g}$. 
\end{lemma}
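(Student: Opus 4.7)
The plan is to extract a subsequence via compactness of $\Ss^n$ so that each puncture sequence stabilizes at a limit point, then to verify regularity on the complement of the described balls directly from the definitions.

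First I would observe that $\Ss^n$ is compact, so by a diagonal extraction applied successively to the $k$ sequences $\{p_j^i\}_{i \geq 1}$ (with $j = 1, \dots, k$) one can pass to a subsequence along which $p_j^i \to p_j^\infty \in \Ss^n$ as $i \to \infty$ for every $j$. The separation hypothesis $\operatorname{dist}_{\overset{\circ}{g}}(p_j^i, p_l^i) \geq \delta_1$ passes to the limit, giving $\operatorname{dist}_{\overset{\circ}{g}}(p_j^\infty, p_l^\infty) \geq \delta_1$ for $j \neq l$, so the limit points are distinct. A subtlety here is that each $\Lambda_i$ is an unordered $k$-tuple, so we must choose a consistent labeling of the punctures; this can be done by a finite relabeling at each stage of the diagonal extraction, using the separation $\delta_1$ to decide which limit point each $p_j^i$ is approaching without ambiguity.

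Next I would fix $i_0$ large enough that $\operatorname{dist}_{\overset{\circ}{g}}(p_j^i, p_j^\infty) < \delta_1 / 4$ for every $j$ whenever $i \geq i_0$. Then for each such $i$, every singular point $p_j^i$ of $u_i$ lies at the center of the ball $\overset{\circ}{\B}_{\delta_1/2}(p_j^i)$, and the singular set of $u_i$ is precisely $\Lambda_i = \{p_1^i, \dots, p_k^i\}$. Consequently the conformal factor $u_i$ (and hence $g_i$) is smooth, and in particular regular, on the open complement
\[
\Ss^n \setminus \bigcup_{j=1}^{k} \overset{\circ}{\B}_{\delta_1/2}(p_j^i).
\]
Moreover, because the centers $p_j^i$ stay within $\delta_1/4$ of the well-separated limits $p_j^\infty$, the balls have pairwise disjoint closures once $i \geq i_0$, so this complement is nonempty and has a $\overset{\circ}{g}$-diameter bounded below uniformly in $i$.

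The \emph{a priori} content of the lemma is slight; the real work is the subsequence extraction plus consistent labeling of punctures, and I expect the only genuine obstacle to be making that labeling rigorous. Once established, the lemma allows later arguments to treat the $p_j^i$ as close to the fixed limit points $p_j^\infty$ and to work on a single compact region $\Ss^n \setminus \bigcup_j \overset{\circ}{\B}_{\delta_1/2}(p_j^\infty)$ on which the conformal factors $u_i$ are uniformly smooth (and indeed uniformly bounded in $\mathcal{C}^{l,\alpha}$ by Corollary \ref{upper_bnd2}), preparing the ground for an Arzel\`a--Ascoli extraction of a convergent subsequence.
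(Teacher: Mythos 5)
Your proof is correct and follows essentially the same route as the paper: extract a convergent subsequence of the puncture configurations using compactness, pass to the limit to get distinct points $\bar p_j$ with pairwise separation $\geq \delta_1$, and then note that regularity on the stated complement follows once $p_j^i$ is close to $\bar p_j$. The paper phrases the compactness step as a single extraction in $(\Ss^n)^k$ restricted to the closed set of $\delta_1$-separated $k$-tuples (its displayed set has an extraneous $\backslash$ that is clearly a typo), whereas you iterate over the $k$ coordinate sequences; since $k$ is finite, this is a finite iterated extraction rather than a genuine diagonal argument, and the two are equivalent. Your worry about unordered tuples is also moot: the paper's notation $\Lambda_i = \{p_1^i,\dots,p_k^i\}$ already carries labels, so no relabeling is required, though your cautious handling of it does no harm.
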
 

\begin{proof} The set 
$$(\Ss^n)^k \backslash \left \{(q_1, \dots, q_k) \in (\Ss^n)^k : 
\dist_{\overset{\circ}{g}} (q_j, q_l) \geq \delta_1 \textrm{ for each }
j \neq l \right \}$$ 
is compact and contains each singular set $\Lambda_i = \{ p_1^i, \dots, 
p_k^i\}$. Thus we may extract a convergent subsequence, 
which we still denote as $\Lambda_i = \{ p_1^i, \dots, p_k^i\}$, 
with $p_j^i \rightarrow \bar p_j$. The 
lemma now follows from $p_j^i \rightarrow \bar p_j$ for each $j$. 
\end{proof} 

To set notation, we define the compact sets 
\begin{equation} \label{unif_cmpt_set} 
K_m = \Ss^n \backslash \left ( \cup_{j=1}^k 
\overset{\circ}{\B}_{2^{-m} \delta_1} (\bar p_j) \right ) 
\end{equation}  
for each natural number $m \in \mathbf{N}$. By construction the family 
$\{ K_m\}$ is a compact exhaustion of $\Ss^n \backslash \{ \bar p_1, 
\dots, \bar p_k\}$. Furthermore, by the convergence $p_j^i \rightarrow 
\bar p_j$, for each fixed $m$ there exists 
$i_0$ such that $i \geq i_0$ implies $u_i$ is smooth in $K_m$. Therefore, 
combining Corollary~\ref{upper_bnd2} and the Arzela-Ascoli theorem 
we obtain a convergent subsequence, which we again denote by $u_i$, 
that converges uniformly on compact subsets of $\Ss^n \backslash 
\overline{\Lambda}$ to a limit $\overline{u}$, where $\overline{\Lambda} =
\{ \bar p_1, \dots, \bar p_k\}$. Furthermore, combining our 
{\it a priori} bounds and elliptic regularity, we see that the 
limit function satisfies 
\begin{equation} \label{limit_eqn} 
\overline{u}: \Ss^n \backslash \overline{\Lambda} \rightarrow [0,\infty), \quad 
\overset{\circ}{P}\overline{u} = \frac{n(n-4)(n^2-4)}{16} \overline{u}^{\frac{n+4}{n-4}}.
\end{equation} 

\begin{prop} The limit function $\overline{u}$ constructed in the 
paragraph above is positive on $\Ss^n \backslash \{ \bar p_1, 
\dots, \bar p_k\}$. 
\end{prop}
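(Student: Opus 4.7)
The plan is to prove positivity by combining a Harnack-based dichotomy with a nonvanishing argument forced by the lower bound $\epsilon_j(g_i) \geq \delta_2$ on the necksizes.

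First, I would establish the dichotomy that either $\overline u > 0$ on $\Ss^n \setminus \overline\Lambda$ or $\overline u \equiv 0$ there. Passing to a stereographic chart with basepoint away from $\overline\Lambda$, write $\tilde u_i := U_{\textrm{sph}} u_i$ and $\tilde u := U_{\textrm{sph}} \overline u$. Each $\tilde u_i$ solves \eqref{const_q_curv_eq2} with a pure-power superlinear right-hand side vanishing at zero and, by $R_{g_i} \geq 0$ and \eqref{pos_scal}, is superharmonic. The $\mathcal{C}^{4,\alpha}_{\textrm{loc}}$-convergence supplied by Corollary~\ref{upper_bnd2} lets both the equation and the inequality $-\Delta_0 \tilde u \geq 0$ descend to $\tilde u$ on $\R^n \setminus \{\bar q_1, \ldots, \bar q_k\}$. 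If $\tilde u(x_0) = 0$ at some regular point, I would apply Theorem~\ref{harnack_inequality} on a small round ball about $x_0$ contained in the regular set; the resulting $\sup \tilde u \leq C \inf \tilde u = 0$ forces $\tilde u \equiv 0$ on that ball. Because $n \geq 5$, the regular set $\Ss^n \setminus \overline\Lambda$ is connected, so a standard open--closed continuation argument propagates the vanishing globally, giving $\overline u \equiv 0$ on all of $\Ss^n \setminus \overline\Lambda$.

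It remains to rule out the trivial branch $\overline u \equiv 0$, which I would do using the radial Pohozaev invariants. The hypothesis $\epsilon_j(g_i) \geq \delta_2$, together with the energy-ordering lemma preceding Remark~1, yields the uniform upper bound
\[
\mathcal{P}_{\textrm{rad}}(g_i, p_j^i) \;\leq\; |\Ss^{n-1}|\,\overline{\mathcal{H}}(v_{\delta_2}) \;=:\; -c_0 \;<\; 0.
\]
On the other hand, fix a small round radius $r \in (0, \delta_1/4)$ and set $t_0 = -\log r$. Since $p_j^i \to \bar p_j$, for $i$ large the sphere $\{t_0\}\times \Ss^{n-1}$ in the cylindrical coordinates \eqref{cyl_variables} about $p_j^i$ lies inside a fixed compact set of the form $K_m$ from \eqref{unif_cmpt_set}, on which Corollary~\ref{upper_bnd2} yields uniform $\mathcal{C}^4$-bounds. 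Assuming $\overline u \equiv 0$, the resulting $\mathcal{C}^4$-convergence $u_i \to 0$ on $K_m$ transports via \eqref{cyl_variables} into $v_i \to 0$ in $\mathcal{C}^4(\{t_0\}\times \Ss^{n-1})$. Every term of the Hamiltonian $\mathcal{H}(v)$ in \eqref{poho_integrand1} is at least quadratic in $v$ and its derivatives, so $\mathcal{H}(v_i) \to 0$ uniformly on this sphere, and therefore
\[
\mathcal{P}_{\textrm{rad}}(g_i, p_j^i) \;=\; \int_{\{t_0\}\times \Ss^{n-1}} \mathcal{H}(v_i)\, d\theta \;\longrightarrow\; 0,
\]
contradicting the uniform upper bound $-c_0$.

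The main obstacle, and the only step that requires real care, is the coordination between the moving puncture $p_j^i$ and the fixed cylindrical height $t_0$ at which the Pohozaev integral is evaluated: one needs the sphere $\partial \overset{\circ}{\B}_r(p_j^i)$ to sit in a region of uniform $\mathcal{C}^4$-control for every sufficiently large $i$. This is arranged by taking $r$ strictly smaller than $\delta_1/4$ and choosing $m$ in \eqref{unif_cmpt_set} so that $2^{-m}\delta_1 < r - \dist_{\overset{\circ}{g}}(p_j^i, \bar p_j)$ eventually. The remaining ingredients---the Harnack dichotomy, continuation across the connected regular set, and the energy-ordering of Delaunay solutions---are essentially off-the-shelf in this setting.
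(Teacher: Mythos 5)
Your proof is correct, but it takes a genuinely different and arguably more elementary route than the paper. The paper rescales by $\epsilon_i = u_i(q) \to 0$, sets $w_i = u_i/\epsilon_i$, identifies the limit $\overline w$ as a nonzero combination of Paneitz Green's functions via Theorem~\ref{greens_func}, and then feeds the Green's function expansion \eqref{green_estimate} into the decomposed conserved quantity of Lemma~\ref{poho_general_const} to compute $\lim_i \mathcal{P}_{\textrm{rad}}(g_i, p_1^i)$ explicitly, showing it is $\mathcal{O}(\epsilon_i^2)$ and hence tends to $0$. You instead first establish the dichotomy $\overline u \equiv 0$ or $\overline u > 0$ via the Harnack inequality (equivalently the strong minimum principle for the superharmonic limit) and connectedness of $\Ss^n\setminus\overline\Lambda$, then rule out the trivial branch by observing that every monomial in $\mathcal{H}$ in \eqref{poho_integrand1} is at least quadratic in $(v,\nabla v,\ldots,\nabla^3 v)$, so $\mathcal{C}^4$-convergence $v_i\to 0$ on a fixed sphere $\{t_0\}\times\Ss^{n-1}$ immediately forces $\mathcal{P}_{\textrm{rad}}(g_i,p_j^i)\to 0$. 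This avoids the rescaling, the Green's function theorem, and the expansion \eqref{green_estimate} entirely; what the paper's route buys in exchange is a quantitative decay rate $\mathcal{P}_{\textrm{rad}}(g_i,p_j^i)=\mathcal{O}(\epsilon_i^2)$, though that rate is not needed for the contradiction. Your bookkeeping of the moving center $p_j^i$ versus the fixed $t_0$, via the exhaustion \eqref{unif_cmpt_set} and $r<\delta_1/4$, correctly handles the one delicate point, and your use of the monotonicity in the energy-ordering lemma to produce the uniform bound $\mathcal{P}_{\textrm{rad}}(g_i,p_j^i)\leq |\Ss^{n-1}|\,\overline{\mathcal{H}}(v_{\delta_2})<0$ is the right quantitative version of the hypothesis $\epsilon_j(g_i)\geq\delta_2$ (and gets the sign right, which the paper's closing display momentarily garbles).
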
 

\begin{proof} If the proposition does not hold then there exists 
$q \in \Ss^n \backslash \{ \bar p_1, \dots, \bar p_k\}$ such that 
$$0 = \overline{u} (q) = \lim_{i \rightarrow \infty} u_i(q).$$
Let $\epsilon_i = u_i(q)$ and 
$$w_i : \Ss^n \backslash \{ p_1^i, \dots, p_k^i\} \rightarrow (0, \infty), \qquad 
w_i (x) = \frac{1}{\epsilon_i} u_i(x).$$ 
As a consequence of \eqref{scaling_law}, we have 
\begin{equation} \label{revised_scaling1} 
\overset{\circ}{P} w_i = \epsilon_i^{\frac{8}{n-4}} \frac{n(n-4)(n^2-4)}{16} 
w_i^{\frac{n+4}{n-4}}.\end{equation} 
In addition, $w_i$ satisfies the normalization 
\begin{equation} \label{normalization} 
w_i(q) = 1\end{equation} 
for each $i$ by construction. 

By \eqref{apriori_upper_bnd2}, for each $m \in \mathbf{N}$ there 
exists $C_1$ depending on $m$ and the dimesnion $n$ such that 
\begin{equation} \label{final_step1}
\sup_{K_m} u_i \leq C_1. \end{equation}
Next we find an upper bound for $1/\epsilon_i$. Using the fact that 
$0<U_{\textrm{sph}} \leq 2^{\frac{n-4}{2}}$ and the Harnack 
inequality in Theorem~\ref{harnack_inequality} we get
$$2^{\frac{n-4}{2}} \epsilon_i \geq \inf_{K_m} (U_{\textrm{sph}} u_i) 
\geq \frac{1}{\widetilde C(m,n)} \sup_{K_m} (U_{\textrm{sph}} u_i) 
= C_2,$$ 
and so 
\begin{equation} \label{final_step2} 
\frac{1}{\epsilon_i} \leq C_3. \end{equation}  
Combining \eqref{final_step1} and \eqref{final_step2} 
we obtain a uniform upper bound 
$$\sup_{K_m} w_i \leq C_3,$$ 
where $C_3$ depends only on $n$ and $m$. 

We conclude that $w_i$ converges uniformly 
on compact subsets of $\Ss^n \backslash \overline{\Lambda}$
to a function 
$$\overline{w} : \Ss^n \backslash \overline{\Lambda} \rightarrow 
[0,\infty) , \qquad 
\overset{\circ}{P}\overline{w} = 0.$$ 
By Theorem \ref{greens_func} we have 
\begin{equation} \label{final_step3} 
\overline{w} = \sum_{j=1}^k \alpha_j G_{\bar p_j},
\end{equation} 
for some coefficients $\alpha_j \geq 0$. By the 
normalization \eqref{normalization} at least one of the 
$\alpha_j$'s is positive, so (in particular) $\overline{w}$ 
is a smooth, positive function. 

Without loss of generality, we may assume $\alpha_1 \neq 0$ 
and center our coordinate system at $\bar p_1$. We now 
use the cylindrical coordinates $t = -\log |x|$ and $\theta = \frac{x}{|x|}$
in a punctured ball centered on $\bar p_1 = 0$, and define 
$$v_i(t,\theta) = e^{\left ( \frac{4-n}{2} \right ) t} u_i(e^{-t} \theta)
U_{\textrm{sph}} (e^{-t} \theta) , \quad 
z_i (t,\theta) = \frac{1}{\epsilon_i} v_i (t,\theta)  = e^{\left ( \frac{4-n}{2} \right )
t} w_i (e^{-t} \theta) U_{\textrm{sph}}(e^{-t}\theta) $$
and 
$$ \overline{v}(t,\theta) = e^{\left ( \frac{4-n}{2} \right )t} \overline{u} (e^{-t} \theta)
(\cosh t)^{\frac{4-n}{2}}, \quad \overline{z}(t,\theta) = e^{\left ( \frac{4-n}{2} 
\right )t} \overline{w} (e^{-t} \theta) (\cosh t)^{\frac{4-n}{2}}. $$
By the expansion \eqref{green_estimate} we have 
\begin{eqnarray} \label{final_step4} 
\overline{z} (t,\theta) & = & e^{\left ( \frac{4-n}{2} \right )t} 
(\cosh t)^{\frac{4-n}{2}} \left ( \frac{\alpha_1}{2n(n-2)(n-4)\omega_n}
e^{\left ( \frac{n-4}{2} \right )t} + \mathcal{O}(1) \right ) \\ \nonumber
& = & \frac{\alpha_1}{2n(n-2)(n-4)\omega_n} + \mathcal{O} 
(e^{(4-n) t}). 
\end{eqnarray} 

Observe that $z_i$ satisfies the PDE 
$$P_{\textrm{cyl}} z_i = \epsilon_i^{\frac{8}{n-4}} \frac{n(n-4)(n^2-4)}{16} 
z_i^{\frac{n+4}{n-4}},$$
so, following Lemma \ref{poho_general_const}, the integral 
$$\int_{\{ t \} \times \Ss^{n-1}} \mathcal{H}_{\textrm{cyl}} (z_i) 
+ \epsilon_i^{\frac{8}{n-4}} \frac{(n-4)^2(n^2-4)}{32} z_i^{\frac{2n}{n-4}}
d\theta$$ 
does not depend on $t$. Moreover, taking a limit as $i \rightarrow \infty$, 
we obtain 
\begin{eqnarray} \label{final_step5} 
& \lim_{i \rightarrow \infty} \int_{\{ t \} \times \Ss^{n-1}} 
\mathcal{H}_{\textrm{cyl}} (z_i) + \epsilon_i^{\frac{8}{n-4} }
\frac{(n-4)^2(n^2-4)}{32} z_i^{\frac{2n}{n-4}} d\theta & 
\\ \nonumber 
= & \int_{ \{ t \} \times \Ss^{n-1}} \mathcal{H}_{\textrm{cyl}} 
(\overline{z}) d\theta & \\ \nonumber 
= & -\int_{ \{ t \} \times \Ss^{n-1}} \frac{n^2(n-4)^2}{32} 
\cdot \frac{\alpha_1^2}{4n^2(n-2)^2(n-4)^2 \omega_n^2} + 
\mathcal{O}(e^{(4-n)t}) & \\ \nonumber 
= &  -\frac{n\alpha_1^2}{128(n-2)^2 \omega_n} + 
\mathcal{O}(e^{(4-n)t}). & 
\end{eqnarray} 

On the other hand, by construction 
\begin{eqnarray*} 
\mathcal{P}_{\textrm{rad}}(v_i) & = & \int_{\{ t \} \times 
\Ss^{n-1}} \mathcal{H}_{\textrm{cyl}}(v_i) + \frac{(n-4)^2(n^2-4)}{32} 
v_i^{\frac{2n}{n-4}} d\theta \\ 
& = & \int_{\{ t \}  \times \Ss^{n-1}} \epsilon_i^2 
\mathcal{H}_{\textrm{cyl}} (z_i)+ \frac{(n-4)^2(n^2-4)}{32}
\epsilon_i^{\frac{2n}{n-4}} z_i^{\frac{2n}{n-4}} d\theta \rightarrow 0,
\end{eqnarray*} 
and so 
$$\lim_{i \rightarrow \infty} \mathcal{P}_{\textrm{rad}} 
(g_i, p_1^i) = 0.$$ 
This contradicts the hypothesis that the asymptotic 
necksizes of $g_i = u_i^{\frac{4}{n-4}} \overset{\circ}{g}$ 
at the puncture points $p_1^i$ are all bounded away from $0$. 
\end{proof}

We finally complete the proof of Theorem \ref{cmptness_thm}. 
\begin{proof} Given a sequence $\{ g_i \} \in \Omega_{\delta_1, \delta_2}$, we 
have already obtained a limit $\overline{g} = 
\overline{u}^{\frac{4}{n-4}} \overset{\circ}{g}$ as a limit of a subsequence. 
We know that 
$$\overline{u} : \Ss^n \backslash \{ \bar p_1, \dots, \bar p_k\} \rightarrow 
(0,\infty), \quad \overset{\circ}{P} \overline{u} = \frac{n(n-4)(n^2-4)}{16} 
\overline{u}^{\frac{n+4}{n-4}},$$ 
where $\bar p_j = \lim_{i \rightarrow \infty} p_j^i$. We have also shown 
that $\overline{u} > 0$ on $\Ss^n \backslash \{ \bar p_1, \dots, 
\bar p_k\}$.

It only remains to verify that $\overline{g}$ is 
complete. If $\overline{g}$ is incomplete then there exists 
$j \in \{ 1, \dots, k\}$ such that 
$$\liminf_{x \rightarrow \bar p_j} \overline{u}(x) < \infty.$$
In this case Theorem \ref{rmbl_sing} implies $\mathcal{P}_{\textrm{rad}} 
(\overline{g}, \bar p_j) = 0$. However, by construction
$$\mathcal{P}_{\textrm{rad}}(\overline{g}, \bar p_j) = \lim_{i \rightarrow \infty} 
\mathcal{P}_{\textrm{rad}} (g_i, p_j^i) \geq \delta_2,$$
giving a contradiction. We conclude that $\overline{g}$ is indeed in  
$\Omega_{\delta_1, \delta_2}$. 
\end{proof} 


\begin {thebibliography} {999}

\bibitem{ADM} M. O. Ahmedou, Z. Djadli, and A. Malchiodi. {\it Prescribing
a fourth-order conformal invariant on the standard sphere II: blow-up analysis and 
applications.} Ann. Scuola Norm. Sup. Pisa {\bf 5} (2002), 387--434.

\bibitem{AO} J. H. Andrade and J. M. do \'O. {\it Asymptotics for singular solutions 
of conformally invariant fourth order systems in the punctured ball.} preprint, 
{\tt arxiv:2003.03487}. 

\bibitem{Aub} T. Aubin. {\it \'Equations diff\'erentielles non lin\'eaires et 
probl\`eme de Yamabe concernant la courbure scalaire.} J. Math. Pures  
Appl. {\bf 55} (1976), 269--296. 


\bibitem {vdB} J. van den Berg. {\it The phase-plane picture for a class of 
fourth-order conservative differential equations.} J. Differential Equations. 
{\bf 161} (2000), 110--153. 

\bibitem {Bran1} T. Branson. {\it Differential operators canonically associated to a 
conformal structure.} Math. Scandinavia. {\bf 57} (1985), 293--345. 

\bibitem {Bran2} T. Branson. {\it Group representations arising from Lorentz 
conformal geometry.} J. Funct. Anal. {\bf 74} (1987), 199--291.

\bibitem {BG} T. Branson and A. R. Gover. {\it Origins, applications and generalisations 
of the $Q$-curvature.} Acta Appl. Math. {\bf 102} (2008), 131--146. 

\bibitem{CM} G. Caristi and E. Mitidieri. {\it Harnack inequalities and applications to solutions 
of biharmonic equations.} Operator Theory: Advances and Applications. {\bf 168} (2006), 1--26. 

\bibitem {CEOY} S.-Y. A. Chang, M. Eastwood, B. \O rsted, and P. Yang. 
{\it What is $Q$-curvature?} Acta Appl. Math. {\bf 102} (2008), 119--125. 

\bibitem{CHY} S.-Y. A. Chang, Z.-C. Han, and P. Yang. {\it Some remarks on 
the geometry of a class of locally conformally flat metrics.} Progress in Math. 
{\bf 333} (2020), 37--56. 


\bibitem {FK} R. Frank and T. K\"onig. {\it Classification of 
positive solutions to a nonlinear biharmonic equation with critical 
exponent.} Anal. PDE {\bf 12} (2019), 1101--1113.


\bibitem{GM} M. Gursky and A. Malchiodi. {\it A strong maximum principle for the 
Paneitz operator and a non-local flow for the $Q$-curvature.} J. Eur. Math. Soc. 
{\bf 17} (2015), 2137--2173.

\bibitem {GO} A. R. Gover and B. \O rsted. {\it Universal principles for 
Kazdan-Warner and Pohozaev-Schoen type identities.} Comm. Contemp. 
Math. {\bf 15} (2013), 

\bibitem {HY} F. Hang and P. Yang. {\it Lectures on the fourth order $Q$-curvature
equation.} Geometric analysis around scalar curvature, Lect. Notes Ser. Inst. Math. 
Sci. Natl. Univ. Singap. {\bf 31} (2016), 1--33.  

\bibitem {HY2} F. Hang and P. Yang. {\it $Q$-curvature on a class of manifolds 
with dimension at least $5$.} Comm. Pure Appl. Math. {\bf 69} (2016), 1452--1491.

\bibitem{JX} T. Jin and J. Xiong. {\it Asymptotic symmetry and local behavior of 
solutions of higher order conformally invariant equations with isolated singularities.} preprint, {\tt arxiv:1901.01678}.  

\bibitem{LP} J. Lee and T. Parker. {\it The Yamabe problem.} Bull. Amer. Math. 
Soc. {\bf 17} (1987), 37--91. 

\bibitem {LY}Y.-J. Lin and W. Yuan. {\it A symmetric $2$-tensor 
cannonically associated to $Q$-curvature and its applications.}  
Pac. J. Math. {\bf 291} (2017) 425--438. 

\bibitem {KMS} M. Khuri, F. C. Marques, and R. Schoen. {\it A compactness 
theorem for the Yamabe problem.} J. Differential Geom. {\bf 81} (2009), 
143--196. 

\bibitem {Lin} C. S. Lin. {\it A classification of solutions of a conformally invariant 
fourth order equation in $\R^n$.} Comment. Math. Helv. {\bf 73} (1998), 206--231.

\bibitem {MPU} R. Mazzeo, D. Pollack, and K. Uhlenbeck. {\it Moduli spaces 
of singular Yamabe metrics.} J. Amer. Math. Soc. {\bf 9} (1996), 303--344. 

\bibitem {Pan1} S. Paneitz. {\it A quartic conformally covariant differential operator 
for arbitrary pseudo-Riemannian manifolds.} SIGMA Symmetry Integrability Geom. 
Methods Appl. {\bf 4} (2008), 3 pages (preprint from 1983). 

\bibitem {Pol} D. Pollack. {\it Compactness results for complete metrics of constant
positive scalar curvature on subdomains of $\Ss^n$.} Indiana Univ. Math. J. 
{\bf 42} (1993), 1441--1456. 

\bibitem {R} J. Ratzkin. {\it On constant $Q$-curvature metrics with 
isolated singularities.} preprint, {\tt arXiv:2001.07984}. 


\bibitem{Sch} R. Schoen. {\it Conformal deformation of a Riemannian 
metric to constant scalar curvature.} J. Diff. Geom. {\bf 20} (1984), 479--495. 


\bibitem{Tru} N. Trudinger. {\it Remarks concerning the conformal deformation 
of Riemannian structures on compact manifolds.} Ann. Scuola Norm. Pisa. 
{\bf 22} (1968), 265--274. 

\bibitem{Wei} W. Wei. {\it Compactness theorem of complete 
$k$-curvature manifolds with isolated singularities.} 
preprint, {\tt arxiv.2008.08777}.

\bibitem {Y} H. Yamabe. {\it On the deformation of Riemannian structures 
on a compact manifold.} Osaka Math. J. {\bf 12} (1960), 21--37. 

\end {thebibliography}

\end {document}